\newtheorem{thm}{Theorem}[section]
\newtheorem{lem}{Lemma}[section]
\theoremstyle{definition}
\theoremstyle{remark}
\newtheorem{rem}{Remark}[section]
\numberwithin{equation}{section}
\numberwithin{equation}{section}
\newcounter{saveeqn}
\newcommand{\eqnref}[1]{(\ref {#1})}
\newcommand{\bE}{\mathbf{E}}
\newcommand{\bF}{\mathbf{F}}
\newcommand{\bG}{\mathbf{G}}
\newcommand{\bH}{\mathbf{H}}
\newcommand{\Om}{\Omega}
\newcommand{\Bx}{\mathbf{x}}
\newcommand{\By}{\mathbf{y}}
\newcommand{\BU}{\mathbf{U}}
\newcommand{\Acal}{\mathcal{A}}
\newcommand{\Kcal}{\mathcal{K}}
\newcommand{\Scal}{\mathcal{S}}
\newcommand{\Mcal}{\mathcal{M}}
\newcommand{\Ocal}{\mathcal{O}}
\newcommand{\Vcal}{\mathcal{V}}
\newcommand{\ds}{\displaystyle}
\newcommand{\RR}{\mathbb{R}}
\newcommand{\p}{\partial}
\newcommand{\beq}{\begin{equation}}
\newcommand{\eeq}{\end{equation}}
\DeclareMathAlphabet{\itbf}{OML}{cmm}{b}{it}
\def\by{{{\itbf y}}}
\def\bx{{{\itbf x}}}
\title[Recovery of embedded obstacles and surrounding mediums in EM scattering]{Recovery of an embedded obstacle and the surrounding medium for Maxwell's system}
\author{Youjun Deng}
\address{School of Mathematics and Statistics, Central South University, Changsha, Hunan, China.}
\email{youjundeng@csu.edu.cn, dengyijun\_001@163.com}
\author{Hongyu Liu}
\address{Department of Mathematics, Hong Kong Baptist University, Kowloon, Hong Kong SAR, China}
\email{hongyu.liuip@gmail.com}
\author{Xiaodong Liu}
\address{Institute of Applied Mathematics, Academy of Mathematics and Systems Science, Chinese Academy of Sciences, 100190 Beijing, China.}
\email{xdliu@amt.ac.cn}
\date{} 
\begin{document}
\maketitle

\begin{abstract}
In this paper, we are concerned with the inverse electromagnetic scattering problem of recovering a complex scatterer by the corresponding electric far-field data. The complex scatterer consists of an inhomogeneous medium and a possibly embedded perfectly electric conducting (PEC) obstacle. The far-field data are collected corresponding to incident plane waves with a fixed incident direction and a fixed polarisation, but frequencies from an open interval. It is shown that the embedded obstacle can be uniquely recovered by the aforementioned far-field data, independent of the surrounding medium. Furthermore, if the surrounding medium is piecewise homogeneous, then the medium can be recovered as well. Those unique recovery results are new to the literature. Our argument is based on low-frequency expansions of the electromagnetic fields and certain harmonic analysis techniques.

\vspace{.2in}

\noindent{\bf Keywords:}~~inverse electromagnetic scattering, Maxwell system, embedded obstacle, surrounding medium, uniqueness

\noindent{\bf 2010 Mathematics Subject Classification:}~~35Q60, 35J05, 31B10, 35R30, 78A40

\end{abstract}

\section{Introduction}

In this paper, we are concerned with the inverse problem of recovering an unknown/in-accessible scatterer by the associated electromagnetic wave probing. The electromagnetic scattering is governed by the time-harmonic Maxwell system. This problem serves as a prototype model to many important inverse problems arising in scientific and technological applications \cite{Amm3,CK,Uhl}. We consider the recovery in a complex scenario where the scatterer consists of an inhomogeneous medium and a possibly embedded impenetrable obstacle. The corresponding study becomes radically more challenging compared to the existing ones, where the recovery is mainly concerned with either one of the obstacle and the medium by assuming the other one is known. In what follows, we first present the mathematical setup of our study by introducing the time-harmonic Maxwell system.

%

Let $\Omega$ and $B$ be bounded domains in $\mathbb{R}^3$ such that $\overline{B}\Subset \Omega$, $\Omega\setminus\overline{B}$ and $\mathbb{R}^3\backslash\overline{\Omega}$ are connected. It is assumed that $\partial B$ is Lipschitz continuous. Physically, $B$ denotes a perfectly electric conducting (PEC) obstacle that is embedded inside an inhomogeneous medium in $\Omega\backslash\overline{B}$.
The electromagnetic (EM) medium is characterised by the electric permittivity $\epsilon$, magnetic permeability $\mu$ and electric conductivity $\sigma$. Throughout, we assume that $\mu=\mu_0$ with a positive constant $\mu_0\in\mathbb{R}_+$ and $\sigma=0$.
In the homogeneous background space $\mathbb{R}^3\backslash\overline{\Omega}$, $\epsilon=\epsilon_0$ with a positive constant $\epsilon_0\in\mathbb{R}_+$.
Define $k_0:=\omega\sqrt{\epsilon_0\mu_0}$ to be the wavenumber with respect to a frequency $\omega\in\mathbb{R}_+$.

Introduce the electromagnetic plane waves as follows
$$
\bE^i(\Bx,\omega,\mathbf{d},\mathbf{q}):=\frac{1}{\sqrt{\epsilon_0}}\mathbf{p} e^{ik_0\Bx\cdot \mathbf{d}},\quad \bH^i(\Bx,\omega,\mathbf{d},\mathbf{q}):=\frac{1}{\sqrt{\mu_0}}(\mathbf{d}\times\mathbf{p}) e^{ik_0\Bx\cdot \mathbf{d}},
$$
where $\mathbf{d}\in \mathbb{S}^2:=\{\Bx\in \RR^3: \|\Bx\|=1\}$ signfies the incident direction and $\mathbf{p}\in\mathbb{R}^3$ with $\mathbf{p}\bot\mathbf{d}$ denotes a polarisation vector. Let $\bE(\Bx,\omega,\mathbf{d},\mathbf{q})$ and $\bH(\Bx,\omega,\mathbf{d},\mathbf{q})$, respectively, denote the total electric and magnetic field and they satisfy the time-harmonic Maxwell equations
\begin{equation}\label{eq:pss}
\left \{
 \begin{array}{ll}
\nabla\times{\bE}-i\omega\mu_0{\bH}=0,  &\mbox{in}\,\, \RR^3\setminus \overline{B},\\
\nabla\times{\bH}+i\omega\epsilon {\bE}=0, &\mbox{in}\,\, \RR^3\setminus \overline{B},
 \end{array}
 \right .
\end{equation}
along with the following PEC boundary condition,
\beq\label{eq:radia3}
\nu\times \bE=0, \quad \mbox{on}\,\, \p B,
\eeq
where $\nu$ signifies the exterior unit normal vector to $\partial B$.
The scattered field $\bE^s:=\bE-\bE^i,\, \bH^s:=\bH-\bH^i$ satisfies the Silver-M\"{u}ller radiation condition
\beq\label{eq:radia2}
\lim_{\|\Bx\|\rightarrow\infty} \|\Bx\| \big( \sqrt{\mu_0}\bH^s \times\hat{\Bx}- \sqrt{\epsilon_0}\bE^s\big)=0,
\eeq
where $\hat{\Bx} = \Bx/\|\Bx\|$ for $\mathbf{x}\in\mathbb{R}^3\backslash\{0\}$.
The radiation condition (\ref{eq:radia2}) characterises the outgoing nature of the EM fields and it also implies the following asymptotic expansion of the scattered electric wave,
\begin{equation}\label{eq:ccc1}
\bE^s(\Bx,\omega,\mathbf{d},\mathbf{q})=\frac{e^{ik\|\Bx\|}}{\|\Bx\|}\left\{\bE^\infty(\hat{\Bx},\omega,\mathbf{d},\mathbf{q})+\Ocal\Big(\frac{1}{\|\Bx\|}\Big)\right\}, \qquad\mbox{as}\;\;\|\Bx\|\rightarrow\infty,
\end{equation}
which holds uniformly in all directions $\hat{\Bx}$. The vector field $\bE^{\infty}$ in \eqref{eq:ccc1} defined on the unit sphere $\mathbb{S}^{2}$ is
usually referred to as the electric far-field pattern.

Associated with the Maxwell system described above, the inverse scattering problem that we are concerned with is to recover $B$ and $(\Omega\backslash\overline{B}, \epsilon)$ by knowledge of the far-field pattern $\bE^{\infty}(\hat{\Bx},\omega,\mathbf{d},\mathbf{q})$ for all observation directions $\hat{\Bx}\in\mathbb{S}^2$ and all frequencies in any open interval,
but a fixed incident direction $\mathbf{d}\in\mathbb{S}^2$ and a fixed polarisation $\mathbf{q}\in\mathbb{R}^3$.
It is noted that $\bE^{\infty}(\hat{\Bx},\omega,\mathbf{d},\mathbf{q})$ is (real) analytic in all of its arguments (cf. \cite{CK}), and hence if the far-field pattern is known for $\hat{\Bx}$ from an open subset of $\mathbb{S}^{2}$, then it is known on the whole sphere $\mathbb{S}^{2}$. The same remark holds equally for the frequency $\omega$, the incident direction $\mathbf{d}$ and the polarization $\mathbf{q}$.

There is a fertile mathematical theory for the inverse scattering problem described above.
In this work, we shall be mainly concerned with the unique recovery or identifiability issue; that is, given the measurement data, what kind of unknowns that one can recover.
The unique recovery of solely a PEC obstacle $B$, namely without the presence of the surrounding inhomogeneous medium, by knowledge of $\bE^\infty(\hat{\Bx}, \omega, \mathbf{d},\mathbf{q})$ for either i)~all $\hat{\Bx}$, $\mathbf{d}$ and $\mathbf{q}$ along with a fixed $\omega$; or ii)~all $\hat{\Bx}$ and $\omega$
along with a fixed $\mathbf{d}$ and $\mathbf{q}$ can be found in\cite{CK}. If the obstacle $B$ is of general polyhedral type, the uniqueness results of recovering $B$ can be found in \cite{Liu1,LRX,LYZ}. Without the presence of the embedded obstacle, the uniqueness in recovering solely an inhomogeneous medium by $\bE^\infty(\hat{\Bx}, \omega, \mathbf{d})$ for all $\hat{\Bx}$, $\mathbf{d}$ and $\mathbf{q}$ along with any fixed $\omega$ has been established in \cite{ColtonPaivarinta,18}. The recovery of a complex scatterer as described earlier consisting of both an embedded obstacle and a surrounding inhomogeneous medium was considered in \cite{LiuZhang09AA}. But the study therein is to recover the obstacle by assuming that the surrounding medium is piecewise homogeneous and known a priori. To our best knowledge, there is no unique recovery result available in the literature in simultaneously recovering both $B$ and $(\Omega\backslash\overline{B}, \epsilon)$. It is interesting to note that the simultaneous recovery is also closely related to the partial data inverse boundary value problem in electrodynamics \cite{COS}. Furthermore, in the current article, we consider the recovery by knowledge of $\bE^\infty(\hat{\Bx}, \omega, \mathbf{d}, \mathbf{q})$ for all $\hat{\Bx}$ and $\omega$, but any fixed $\mathbf{d}$ and  $\mathbf{q}$. Compared to the frequently used far-field data in the literature with $\bE^\infty(\hat{\Bx}, \omega, \mathbf{d}, \mathbf{q})$ for all $\hat{\Bx}$, $\mathbf{d}$ and $\mathbf{q}$, but a fixed $\omega$, the scattering information used in our study is obviously diminishing. We establish that that the embedded obstacle can be uniquely recovered by the aforementioned far-field data, independent of the surrounding medium. Furthermore, if the surrounding medium is piecewise homogeneous, then the medium can be recovered as well.

Finally, we briefly discuss the mathematical arguments to establish our unique recovery results. Our idea follows from a recent work \cite{LiuLiu17} by two of the authors where the acoustic case was considered.
First, we derive the integral representation of the solution to the scattering problem involving both the perfect conductor $B$ and the medium $(\Omega\backslash\overline{B}, \epsilon)$.
Then, by considering the low wavenumber asymptotics in terms of $\omega$, we can derive some integral identities,
which can serve to decouple the scattering information of $B$ from that of $(\Omega\backslash\overline{B}, \epsilon)$.
Finally, by using certain harmonic analysis techniques, we can invert the previously obtained integral identities to recover the conductor and the medium.
Our study heavily relies on the low-frequency asymptotics of the underlying scattering problem.
We refer to Dassios and Kleinman   \cite{DK}  and the references therein for relevant results on low-frequency asymptotics for various scattering problems;
and we also refer to Ammari and Kang \cite{AK04} for results on asymptotics of scattering from small inhomogeneities, which are also related to our current study.
However, we would like to emphasise that our results on the low-frequency asymptotics of scattering from an inhomogeneous medium containing an obstacle are new to the literature. Compared to the acoustic study in \cite{LiuLiu17}, the arguments for the electromagnetic case are much more subtle and technical, and we believe the arguments developed in this work can be used to deal with other EM scattering problems.

The rest of the paper is organised as follows. In Section 2, we present some preliminary knowledge on the boundary layer potentials and volume potentials.
Section 3 is devoted to the study of the forward scattering problem. In Section 4, we present the simultaneous recovery results.

\section{Preliminaries on potential operators}

For our study of the inverse problem, we shall solve the boundary value problem \eqref{eq:pss}-\eqnref{eq:radia2} by the integral equation method. To that end, we recall some preliminary knowledge on the function spaces and the potential operators used in the context of Maxwell's equations. We also refer the reader to \cite{CK,Mclean,Ned} for more relevant details. In what follows, for any bounded domain $\mathcal{U}\subset \RR^3$ with a Lipschitz boundary $\p \mathcal{U}$, we denote by $\nu$ the unit outward normal to $\p \mathcal{U}$.

\subsection{Function spaces}
As usual, $L^2(\mathcal{U})$ denotes the set of all square integrable functions on $\mathcal{U}$.
Let $X$ be a domain in $\RR^3$. In what follows, we introduce the following vector spaces
\begin{eqnarray*}
H(\mbox{curl}; \mathcal{U})&:=&\{ \BU\in (L^2(\mathcal{U}))^3;\ \nabla\times \BU\in (L^2(\mathcal{U}))^3 \},\\
H_{loc}(\mbox{curl}; X)&:=&\{ \BU|_\mathcal{U}\in H(\mbox{curl}; \mathcal{U});\ \mathcal{U}\ \ \mbox{is any bounded subdomain of $X$} \},\\
H(\mbox{div}; \mathcal{U})&:=&\{ \BU\in (L^2(\mathcal{U}))^3;\ \nabla\cdot \BU\in L^2(\mathcal{U}) \},\\
H_{loc}(\mbox{div}; X)&:=&\{ \BU|_\mathcal{U}\in H(\mbox{div}; \mathcal{U});\ \mathcal{U}\ \ \mbox{is any bounded subdomain of $X$} \},\\
H(\mbox{div}(\beta\cdot); \mathcal{U})&:=&\{ \BU\in (L^2(\mathcal{U}))^3; \quad \nabla\cdot (\beta \BU)\in L^2(\mathcal{U})\},
\end{eqnarray*}
where $\beta\in L^\infty(\mathcal{U})$.

We need also some spaces for vector fields on the boundary.
Let us first define the space of tangential vector fields by
\begin{eqnarray*}
L_T^2(\p \mathcal{U}):=\{\Phi\in (L^2(\p \mathcal{U}))^3;\, \nu\cdot \Phi=0\}.
\end{eqnarray*}
Denote by $\nabla_{\p \mathcal{U}}\cdot$ the standard surface divergence operator defined on $L_T^2(\p \mathcal{U})$.
We then introduce the normed spaces of tangential fields by
\begin{align*}
\mathrm{TH}({\rm div}, \p \mathcal{U}):&=\Bigr\{ {\Phi} \in L_T^2(\partial \mathcal{U});\,
\nabla_{\partial \mathcal{U}}\cdot {\Phi} \in L^2(\partial \mathcal{U}) \Bigr\},\\
\mathrm{TH}({\rm curl}, \p \mathcal{U}):&=\Bigr\{ {\Phi} \in L_T^2(\partial \mathcal{U});\,
\nabla_{\partial \mathcal{U}}\cdot ({\Phi}\times {\nu}) \in L^2(\partial \mathcal{U}) \Bigr\},
\end{align*}
equipped with the norms
\begin{align*}
&\|{\Phi}\|_{\mathrm{TH}({\rm div}, \p \mathcal{U})}=\|{\Phi}\|_{(L^2(\p \mathcal{U}))^3}+\|\nabla_{\p \mathcal{U}}\cdot {\Phi}\|_{L^2(\p \mathcal{U})}, \\
&\|{\Phi}\|_{\mathrm{TH}({\rm curl}, \p \mathcal{U})}=\|{\Phi}\|_{(L^2(\p \mathcal{U}))^3}+\|\nabla_{\p \mathcal{U}}\cdot({\Phi}\times \nu)\|_{L^2(\p \mathcal{U})}.
\end{align*}
Finally, let $H^s(\partial \mathcal{U})$ be the usual Sobolev space of order $s$ on $\partial \mathcal{U}$.

\subsection{Volume and surface potentials}


Denote by $\Gamma_{k_0}$ the fundamental solution of the Helmholtz equation with wave number $k_{0}$, which is given by
\begin{equation}\label{Gk} \ds
\Gamma_{k_0} (\Bx) = \frac{e^{ik_0 \|\Bx\|}}{4 \pi \|\Bx\|}, \quad \Bx\neq \mathbbold{0}.
 \end{equation}
The volume potential operator $\Vcal_{\mathcal{U}}^{k_0}: (L^2(\mathcal{U}))^3\rightarrow (H^2(\mathcal{U}))^3$ is defined by
\begin{equation*}
\Vcal_{\mathcal{U}}^{k_0}[\Phi](\Bx):=\int_{\mathcal{U}}\Gamma_{k_0}(\Bx-\By)\Phi(\By)d\By,\quad \Bx\in \mathcal{U}.
\end{equation*}
We also denote by $\Scal_\mathcal{U}^{k_0}: H^{-1/2}(\p \mathcal{U})\rightarrow H_{loc}^{1}(\RR^3)$ the single layer potential given by
\begin{equation*}
\Scal_{\mathcal{U}}^{k_0}[\phi](\Bx):=\int_{\p \mathcal{U}}\Gamma_{k_0}(\Bx-\By)\phi(\By)d s_\By, \quad \Bx\in \RR^3,
\end{equation*}
and $\Kcal_\mathcal{U}^{k_0}: H^{1/2}(\p \mathcal{U})\rightarrow H^{1/2}(\p \mathcal{U})$ the Neumann-Poincar\'e operator
\begin{equation*}
\Kcal_{\mathcal{U}}^{k_0}[\phi](\Bx):=\mbox{p.v.}\quad\int_{\p \mathcal{U}}\frac{\p\Gamma_{k_0}(\Bx-\By)}{\p \nu_y}\phi(\By)d s_\By,\quad \Bx\in\p \mathcal{U},
\end{equation*}
where p.v. stands for the Cauchy principle value.
It is known that the single layer potential $\Scal_\mathcal{U}^{k_0}$ satisfies the trace formula
\begin{equation*}
\frac{\p}{\p\nu}\Scal_\mathcal{U}^{k_0}[\phi] \Big|_{\pm}
= \Big(\mp \frac{1}{2}I+(\Kcal_{\mathcal{U}}^{k_0})^*\Big)[\phi] \quad \mbox{on } \p \mathcal{U},
\end{equation*}
where $(\Kcal_{\mathcal{U}}^{k_0})^*$ is the adjoint operator of $\Kcal_\mathcal{U}^{k_0}$.
In addition, for a density $\Phi \in \mathrm{TH}(\mbox{div}, \p \mathcal{U})$, we also define the
vectorial single layer potential by
\begin{equation*}
\mathcal{A}_\mathcal{U}^{k_0}[\Phi](\Bx) := \int_{\p \mathcal{U}} \Gamma_{k_0}(\Bx-\By)
\Phi(\By) d s_\By, \quad \Bx \in \RR^3.
\end{equation*}
It is known that $\nabla\times\mathcal{A}_\mathcal{U}^{k_0}$ satisfies the following jump formula
\begin{equation}\label{jumpM}
\nu \times \nabla \times \mathcal{A}_\mathcal{U}^{k_0}[\Phi]\big\vert_\pm = \pm \frac{\Phi}{2} + \mathcal{M}_\mathcal{U}^{k_0}[\Phi] \quad \mbox{ on } \p \mathcal{U},
\end{equation}
where
\begin{equation*}
\quad \nu \times \nabla \times \mathcal{A}_\mathcal{U}^{k_0}[\Phi]\big\vert_\pm (\Bx)= \lim_{t\rightarrow 0^+} \nu \times \nabla \times \mathcal{A}_\mathcal{U}^{k_0}[\Phi] (\Bx\pm t \nu), \quad \Bx\in \p \mathcal{U}
\end{equation*}
is understood in the sense of uniform convergence on $\p \mathcal{U}$ and the boundary integral operator
$\mathcal{M}^{k_0}_\mathcal{U}: \mathrm{TH}(\mbox{div}, \p \mathcal{U})\rightarrow \mathrm{TH}(\mbox{div}, \p \mathcal{U})$ is given by
\begin{equation*}
\mathcal{M}^{k_0}_\mathcal{U}[\Phi](\Bx):= \mbox{p.v.}\quad\nu  \times \nabla \times \int_{\p \mathcal{U}} \Gamma_{k_0}(\Bx,\By) \Phi(\By) d s_\By, \quad \Bx\in \p \mathcal{U}.
\end{equation*}

\section{Integral representation and low frequency expansion}
Based on the surface and volume potentials defined in the previous section, we next use the integral equation method to solve the exterior boundary value problem \eqref{eq:pss}-\eqref{eq:radia2}.

Define a $6\times 6$ matrix function $\bG$ as follows,
\begin{equation*}
\bG(\Bx)= \left(
\begin{array}{ll}
(k_0^2I_{3}+\nabla^2)\Gamma_{k_0}(\Bx) & i\omega\mu_0\nabla\times (\Gamma_{k_0}(\Bx)I_3)\\
-i\omega\epsilon_0\nabla\times(\Gamma_{k_0}(\Bx)I_3) & (k_0^2I_{3}+\nabla^2)\Gamma_{k_0}(\Bx)
\end{array}
\right),
\end{equation*}
where $I_3$ is the $3\times 3$ identity matrix and $\nabla^2$ is the Hessian matrix.
The matrix function $\bG$ is the fundamental solution to the homogeneous Maxwell's equations \cite{18}.
Based on this, we look for a solution to \eqnref{eq:pss}-\eqnref{eq:radia2} in $\RR^3\setminus \overline{B}$ of the form
\beq\label{eq:repre1}
\left(
\begin{array}{l}
\bE\\
\bH
\end{array}
\right)
=\left(
\begin{array}{l}
\bE^i\\
\bH^i
\end{array}
\right)
+\left(
\begin{array}{l}
\bE_0\\
\bH_0
\end{array}
\right)
+\int_{\RR^3\setminus\overline{B}}\bG(\cdot-\By)
\left(
\begin{array}{c}
\tilde{\epsilon}(\By)\bE(\By)\\
0
\end{array}
\right)
d\By,
\eeq
where
\begin{equation}\label{eq:defnew}
\tilde{\epsilon}:=(\epsilon-\epsilon_0)/\epsilon_0
\end{equation}
and the additional field $(\bE_0, \bH_0)$ in \eqnref{eq:repre1} takes the following form
\beq\label{eq:soluformPEC01}
\bE_0=\nabla\times\Acal_B^{k_0}[\Phi_E], \quad \bH_0=-i/(\omega\mu_0)\nabla\times\nabla\times \Acal_B^{k_0}[\Phi_E]
\eeq
for some $\Phi_E\in \mathrm{TH}(\mbox{div}, \p B)$. Clearly, the field $(\bE_0, \bH_0)$ is a radiating solution to the homogeneous Maxwell system in $\RR^3\setminus \overline{B}$.
Note that $\tilde\epsilon$ is compactly supported in $\Omega$. We can rewrite \eqnref{eq:repre1} in the following form
\beq\label{eq:eq:repre1}
\left(
\begin{array}{l}
\bE-\bE_0\\
\bH-\bH_0
\end{array}
\right)-\mathfrak{G}_{\Omega\setminus\overline{B}}^{k_0}
\left(
\begin{array}{l}
\tilde\epsilon\bE\\
0
\end{array}
\right)
=
\left(
\begin{array}{l}
\bE^i\\
\bH^i
\end{array}
\right),
\eeq
where the matrix operator $\mathfrak{G}_{\Omega\setminus\overline{B}}^{k_0}: \big(L^2(\Omega\setminus\overline{B})\big)^3\times \big(L^2(\Omega\setminus\overline{B})\big)^3 \rightarrow \big(L^2(\Omega\setminus\overline{B})\big)^3\times \big(L^2(\Omega\setminus\overline{B})\big)^3$ is defined by
\begin{equation*}
\mathfrak{G}_{\Omega\setminus\overline{B}}^{k_0}:=
\left(
\begin{array}{cc}
(k_0^2I_{3}+D^2)\Vcal_{\Omega\setminus\overline{B}}^{k_0} & i\omega\mu_0\nabla\times\Vcal_{\Omega\setminus\overline{B}}^{k_0}\\
-i\omega\epsilon_0\nabla\times\Vcal_{\Omega\setminus\overline{B}}^{k_0} & (k_0^2I_{3}+D^2)\Vcal_{\Omega\setminus\overline{B}}^{k_0}
\end{array}
\right)
\end{equation*}
with $D^2:=\nabla\nabla\cdot$.
By combining the equations \eqnref{eq:soluformPEC01}-\eqnref{eq:eq:repre1} and the PEC boundary condition \eqnref{eq:radia3}, together with the jump formula \eqnref{jumpM}, we obtain the following equations
\beq\label{eq:reconEq1}
\left \{
 \begin{array}{ll}
(I_3-\mathfrak{D}\Vcal_{\Omega\setminus\overline{B}}^{k_0}\tilde\epsilon)[\bE]
-\nabla\times\Acal_B^{k_0}[\Phi_E]=\bE^i\quad \mbox{in}\,\,\Omega\setminus\overline{B}, \\
-k_0^2\nabla\times\Vcal_{\Omega\setminus\overline{B}}^{k_0}\tilde\epsilon[\bE]
+i\omega\mu_0\bH-\mathfrak{D}\Acal_B^{k_0}[\Phi_E]=i\omega\mu_0\bH^i\quad \mbox{in}\,\,\Omega\setminus\overline{B},\\
\nu\times\mathfrak{D}\Vcal_{\Omega\setminus\overline{B}}^{k_0}\tilde\epsilon[\bE]+(\frac{I}{2}+\Mcal_B^{k_0})[\Phi_E]=-\nu\times\bE^i \quad \mbox{on}\,\,\p B,
 \end{array}
 \right .
\eeq
where $\mathfrak{D}:=k_0^2I_{3}+D^2$. Recall that ${I}/{2}+\Mcal_B^{k_0}$ is invertible on $\mathrm{TH}(\mbox{div}, \p B)$ when $k_0$ is sufficiently small (see \cite{ADM,BL,T}), one then can obtain from the third equation in \eqnref{eq:reconEq1} that
\beq\label{eq:reconEqsol1}
\begin{split}
\Phi_E=&-\Big(\frac{I}{2}+\Mcal_B^{k_0}\Big)^{-1}\nu\times\bE^i-\Big(\frac{I}{2}+\Mcal_B^{k_0}\Big)^{-1}\nu\times\mathfrak{D}\Vcal_{\Omega\setminus\overline{B}}^{k_0}\tilde\epsilon[\bE].
\end{split}
\eeq
By substituting \eqnref{eq:reconEqsol1} into the first two equations in \eqnref{eq:reconEq1} one can obtain that
\beq\label{eq:reconEq2}
\left \{
 \begin{array}{ll}
\mathfrak{F}_1(\omega,\epsilon)[\bE]=-\nabla\times\Acal_B^{k_0}(\frac{I}{2}+\Mcal_B^{k_0})^{-1}[\nu\times\bE^i]+\bE^i,  \\
\mathfrak{F}_2(\omega,\epsilon)[\bE]+i\omega\mu_0\bH
=-\mathfrak{D}\Acal_B^{k_0}(\frac{I}{2}+\Mcal_B^{k_0})^{-1}[\nu\times\bE^i]+i\omega\mu_0\bH^i,
 \end{array}
 \right.
\eeq
where the operators $\mathfrak{F}_1$ and $\mathfrak{F}_2$ are defined respectively by
\begin{equation*}
\begin{split}
\mathfrak{F}_1(\omega,\epsilon)&=I_3-\mathfrak{D}\Vcal_{\Omega\setminus\overline{B}}^{k_0}\tilde\epsilon+\nabla\times\Acal_B^{k_0}
\Big(\frac{I}{2}+\Mcal_B^{k_0}\Big)^{-1}\nu\times\mathfrak{D}\Vcal_{\Omega\setminus\overline{B}}^{k_0}\tilde\epsilon, \\
\mathfrak{F}_2(\omega,\epsilon)&=-k_0^2\nabla\times\Vcal_{\Omega\setminus\overline{B}}^{k_0}\tilde\epsilon
+\mathfrak{D}\Acal_B^{k_0}\Big(\frac{I}{2}+\Mcal_B^{k_0}\Big)^{-1}\nu\times\mathfrak{D}\Vcal_{\Omega\setminus\overline{B}}^{k_0}\tilde\epsilon.
\end{split}
\end{equation*}
For asymptotic analysis, we first define some operators as follows. Let $\mathfrak{K}_j$  and $\mathfrak{L}_j$, $j=1, 2$, be defined on $H(\mbox{curl}; \Omega\setminus\overline{B})$ by
\begin{equation*}
\begin{split}
\mathfrak{K}_1(\epsilon)&=D^2\Vcal_{\Omega\setminus\overline{B}}^{0}\tilde\epsilon-\nabla\times\Acal_B^{0}
\Big(\frac{I}{2}+\Mcal_B^{0}\Big)^{-1}\nu\times D^2\Vcal_{\Omega\setminus\overline{B}}^{0}\tilde\epsilon, \\
\mathfrak{K}_2(\epsilon)&=-i\mu_0^{-1}D^2\Acal_B^{0}\Big(\frac{I}{2}+\Mcal_B^{0}\Big)^{-1}\nu\times D^2\Vcal_{\Omega\setminus\overline{B}}^{0}\tilde\epsilon,
\end{split}
\end{equation*}
together with
\begin{equation*}
\begin{split}
\mathfrak{L}_1(\epsilon)&=D^2\Vcal_{\Omega\setminus\overline{B}}^{0}\tilde\epsilon
-D^2\Acal_B^{0}\Big(\frac{I}{2}+\Mcal_B^{0}\Big)^{-1}\nu\times\nabla\times\Vcal_{\Omega\setminus\overline{B}}^{0}\tilde\epsilon, \\
\mathfrak{L}_2(\epsilon)&=
i\epsilon_0\Big[\nabla\times\Vcal_{\Omega\setminus\overline{B}}^{0}\tilde\epsilon
-\nabla\times\Acal_B^{0}
\Big(\frac{I}{2}+\Mcal_B^{0}\Big)^{-1}\nu\times \nabla\times\Vcal_{\Omega\setminus\overline{B}}^{0}\tilde\epsilon\Big].
\end{split}
\end{equation*}
We also introduce the following notations,
\beq\label{eq:defopcon01}
\begin{split}
\mathfrak{C}_1[\bE^i]&=-\nabla\times\Acal_B^{0}\Big(\frac{I}{2}+\Mcal_B^{0}\Big)^{-1}[\nu\times\bE^i]+\bE^i, \\
\mathfrak{C}_2[\bE^i]&=i\mu_0^{-1}D^2\Acal_B^{0}\Big(\frac{I}{2}+\Mcal_B^{0}\Big)^{-1}[\nu\times\bE^i]-i\mu_0^{-1}\nabla\times\bE^i.
\end{split}
\eeq
\begin{lem}\label{le:revisele01}
Let $(\bE,\bH)\in H_{loc}(\mbox{curl}; \RR^3\setminus\overline{B})\times H_{loc}(\mbox{curl}; \RR^3\setminus\overline{B})$ be a solution to \eqnref{eq:pss}-\eqnref{eq:radia2}. Then there holds the following asymptotic behavior in $\Omega\setminus\overline{B}$
\beq\label{eq:reconEq3}
\left \{
 \begin{array}{ll}
(I_3-\mathfrak{K}_1(\epsilon))[\bE]
=\mathfrak{C}_1[\bE^i]+\mathcal{R}_1(\omega,\bE),  \\
\mathfrak{K}_2(\epsilon)[\bE]
+\omega\bH
=\mathfrak{C}_2[\bE^i]+\mathcal{R}_2(\omega,\bE),
 \end{array}
 \right.
\eeq
for sufficient small frequency $\omega$. Here, the remainder terms $\mathcal{R}_1(\omega, \bE),\,\mathcal{R}_2(\omega, \bE)\in H(\rm{curl},\Omega\setminus\overline{B})\cap \mbox{H}(\rm{div},\Omega\setminus\overline{B})$ satisfy
\beq\label{eq:remain01}
\mathcal{R}_1(\omega, \bE), \mathcal{R}_2(\omega, \bE)=\Ocal(\omega^2) \quad\mbox{as}\;\;\omega\rightarrow 0.
\eeq
Furthermore, there holds
\beq\label{eq:remainPE02}
\nu\times\mathcal{R}_1(\omega, \bE)=0 \quad \mbox{on}\,\,\p B.
\eeq
\end{lem}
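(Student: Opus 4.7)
The plan is to derive \eqref{eq:reconEq3} directly from the system \eqref{eq:reconEq2} by inserting the low-frequency expansion of every $k_0$-dependent quantity and identifying the leading terms with $I_3-\mathfrak{K}_1(\epsilon)$ and $\mathfrak{K}_2(\epsilon)$. The starting observation is the Taylor expansion
\begin{equation*}
\Gamma_{k_0}(\Bx)=\frac{1}{4\pi\|\Bx\|}+\frac{ik_0}{4\pi}-\frac{k_0^2\|\Bx\|}{8\pi}+\Ocal(k_0^3),
\end{equation*}
in which the $\Ocal(k_0)$ coefficient is a \emph{constant} and is therefore annihilated by every differential operator. Consequently, for any differential operator $P$ with $|P|\geq 1$ one has $P\Gamma_{k_0}=P\Gamma_0+\Ocal(k_0^2)$, which lifts to the operator-level expansions
\begin{equation*}
D^2\Vcal_{\Omega\setminus\overline{B}}^{k_0}=D^2\Vcal_{\Omega\setminus\overline{B}}^{0}+\Ocal(\omega^2),\quad\nabla\times\Acal_B^{k_0}=\nabla\times\Acal_B^{0}+\Ocal(\omega^2),
\end{equation*}
\begin{equation*}
D^2\Acal_B^{k_0}=D^2\Acal_B^{0}+\Ocal(\omega^2),\quad \Mcal_B^{k_0}=\Mcal_B^{0}+\Ocal(\omega^2),
\end{equation*}
and, since $\tfrac{I}{2}+\Mcal_B^{0}$ is boundedly invertible on $\mathrm{TH}(\mathrm{div},\p B)$, a Neumann series argument yields $(\tfrac{I}{2}+\Mcal_B^{k_0})^{-1}=(\tfrac{I}{2}+\Mcal_B^{0})^{-1}+\Ocal(\omega^2)$. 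Together with $\mathfrak{D}=k_0^2 I_3+D^2=D^2+\Ocal(\omega^2)$, these give
\begin{equation*}
\mathfrak{F}_1(\omega,\epsilon)=I_3-\mathfrak{K}_1(\epsilon)+\Ocal(\omega^2),\qquad -i\mu_0^{-1}\mathfrak{F}_2(\omega,\epsilon)=\mathfrak{K}_2(\epsilon)+\Ocal(\omega^2).
\end{equation*}

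The right-hand sides of \eqref{eq:reconEq2} are handled analogously. For the first equation the substitution immediately produces $\mathfrak{C}_1[\bE^i]+\Ocal(\omega^2)$. For the second equation I would divide through by $i\mu_0$ and exploit that the incident fields satisfy $\nabla\times\bE^i=i\omega\mu_0\bH^i$, so that
\begin{equation*}
\omega\bH^i=-i\mu_0^{-1}\nabla\times\bE^i.
\end{equation*}
Combining this with the leading contribution $i\mu_0^{-1}D^2\Acal_B^0(\tfrac{I}{2}+\Mcal_B^0)^{-1}[\nu\times\bE^i]$ of $i\mu_0^{-1}\mathfrak{D}\Acal_B^{k_0}(\tfrac{I}{2}+\Mcal_B^{k_0})^{-1}[\nu\times\bE^i]$ exactly reproduces $\mathfrak{C}_2[\bE^i]$ as defined in \eqref{eq:defopcon01}. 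Collecting all $\Ocal(\omega^2)$ discrepancies into $\mathcal{R}_1$ and $\mathcal{R}_2$ yields \eqref{eq:reconEq3} and the bound \eqref{eq:remain01}.

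Finally, to obtain \eqref{eq:remainPE02} the key tool is the exterior jump relation in \eqref{jumpM}, which implies that for every $\Phi\in\mathrm{TH}(\mathrm{div},\p B)$ and every $k_0\geq 0$,
\begin{equation*}
\nu\times\nabla\times\Acal_B^{k_0}\Big(\tfrac{I}{2}+\Mcal_B^{k_0}\Big)^{-1}[\Phi]\Big|_{\p B}=\Phi.
\end{equation*}
Unpacking the definition $\mathcal{R}_1=(I_3-\mathfrak{K}_1)[\bE]-\mathfrak{C}_1[\bE^i]$ and using the above identity at frequency $k_0$ with $\Phi=\nu\times\mathfrak{D}\Vcal_{\Omega\setminus\overline{B}}^{k_0}\tilde\epsilon[\bE]$ and $\Phi=\nu\times\bE^i$, and at $k_0=0$ with the corresponding zero-frequency densities, one sees that all volume- and surface-potential contributions to $\nu\times\mathcal{R}_1$ cancel pairwise on $\p B$, leaving only $\nu\times\bE$, which vanishes by the PEC boundary condition \eqref{eq:radia3}.

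The principal technical obstacle will be proving the Neumann expansion of $(\tfrac{I}{2}+\Mcal_B^{k_0})^{-1}$ uniformly in $\omega$ as operators on $\mathrm{TH}(\mathrm{div},\p B)$, and then tracking that the composed operators $\nabla\times\Acal_B^{k_0}(\tfrac{I}{2}+\Mcal_B^{k_0})^{-1}\nu\times\mathfrak{D}\Vcal_{\Omega\setminus\overline{B}}^{k_0}\tilde\epsilon$ (and their $\mathfrak{F}_2$-analogues) map continuously into $H(\mathrm{curl},\Omega\setminus\overline{B})\cap H(\mathrm{div},\Omega\setminus\overline{B})$ with the claimed $\Ocal(\omega^2)$ operator-norm remainder. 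A second subtle point is verifying that every operator acting on $\bE$ in $\mathfrak{F}_1$ and $\mathfrak{F}_2$ either carries at least one derivative of $\Gamma_{k_0}$ or is multiplied by the explicit factor $k_0^2$ in $\mathfrak{D}$; this is precisely what upgrades the naive $\Ocal(\omega)$ expansion of $\Gamma_{k_0}$ to the sharper $\Ocal(\omega^2)$ bound required for \eqref{eq:remain01}.
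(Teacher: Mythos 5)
Your proposal is correct and follows essentially the same route as the paper: Taylor-expand $\Gamma_{k_0}$, observe that the $\Ocal(\omega)$ coefficient is constant in $\Bx$ so every differentiated kernel agrees with its static counterpart to $\Ocal(\omega^2)$, substitute into \eqref{eq:reconEq2}, and obtain \eqref{eq:remainPE02} from the jump formula \eqref{jumpM} combined with the PEC condition \eqref{eq:radia3}. You merely spell out more of the intermediate steps (the Neumann-series expansion of $(\tfrac{I}{2}+\Mcal_B^{k_0})^{-1}$ and the identity $\omega\bH^i=-i\mu_0^{-1}\nabla\times\bE^i$) than the paper's terse argument does.
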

\begin{proof}
By \eqnref{Gk} and Taylor expansions, for any $\Bx\in \RR^3\setminus \{0\}$, one has
$$
\Gamma_{k_0}(\Bx)=\Gamma_{0}(\Bx)+ \frac{i\sqrt{\epsilon_0\mu_0}}{4\pi}\omega+\Ocal(\omega^2)  \quad\mbox{as}\;\;\omega\rightarrow 0.
$$
Thus
$$
\nabla\Gamma_{k_0}(\Bx)=\nabla\Gamma_{0}(\Bx)+\Ocal(\omega^2) \quad\mbox{as}\;\;\omega\rightarrow 0.
$$
From this, using asymptotic expansions w.r.t. $\omega$ for \eqnref{eq:reconEq2}, the asymptotic estimates \eqnref{eq:reconEq3} and \eqnref{eq:remain01}
now follow.

Next by using the definitions of $\mathfrak{K}_1(\epsilon)$ and $\mathfrak{C}_1[\bE^i]$, jump formula \eqnref{jumpM} one can easily find that
\beq\label{eq:revisele01tmp01}
\nu\times\mathfrak{K}_1(\epsilon)[\bE]=\nu\times\mathfrak{C}_1[\bE^i]=0 \quad \mbox{on}\,\, \p B.
\eeq
Thus the homogeneous boundary condition \eqnref{eq:remainPE02} follows by using the PEC condition \eqnref{eq:radia3}.
The proof is complete.
\end{proof}

The following lemmas are of great importance for the subsequent analysis.
\begin{lem}\label{le:revisele02}
There holds the following identity
\beq\label{eq:lerevise0201}
\mathfrak{K}_1(\epsilon)[\bE]=\nabla u \quad \mbox{in}\,\, \RR^3\setminus\overline{B},
\eeq
where $u\in H^1_{loc}(\RR^3\setminus\overline{B})$ and $u(\Bx)=\Ocal(\|\Bx\|^{-1})$ as $\|\Bx\|\rightarrow \infty$.
\end{lem}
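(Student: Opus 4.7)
The plan is to split $\mathfrak{K}_1(\epsilon)[\bE]=\nabla w-T$ and to exhibit each summand as a gradient. Here
$w:=\nabla\cdot \Vcal_{\Omega\setminus\overline{B}}^{0}[\tilde\epsilon\bE]$, so that $D^2\Vcal_{\Omega\setminus\overline{B}}^{0}[\tilde\epsilon\bE]=\nabla w$ is already a gradient, and
$T:=\nabla\times\Acal_B^{0}[\phi]$ with density
$\phi:=\big(\tfrac{I}{2}+\Mcal_B^{0}\big)^{-1}[\nu\times\nabla w|_{\p B}]\in\mathrm{TH}(\mathrm{div},\p B)$.
The main task is then to produce a scalar $v$ on $\RR^3\setminus\overline{B}$ with $\nabla v=T$; taking $u:=w-v$ would then give $\nabla u=\mathfrak{K}_1(\epsilon)[\bE]$.

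The heart of the argument is to show that $\phi$ is surface-divergence-free on $\p B$. Applying $\nabla_{\p B}\cdot$ to the defining equation $(\tfrac{I}{2}+\Mcal_B^{0})[\phi]=\nu\times\nabla w|_{\p B}$, the right-hand side vanishes via the surface identity $\nabla_{\p B}\cdot(\nu\times F)=-\nu\cdot(\nabla\times F)|_{\p B}$ applied to $F=\nabla w$. For the left-hand side I would establish
\[
\nabla_{\p B}\cdot\Mcal_B^{0}[\phi]=-(\Kcal_B^{0})^*[\nabla_{\p B}\cdot\phi]
\]
by combining three ingredients: (i) the vector identity $\nabla\times\nabla\times\Acal_B^{0}[\phi]=\nabla(\nabla\cdot\Acal_B^{0}[\phi])$ in $\RR^3\setminus\p B$, since $\Gamma_{0}$ is harmonic off the diagonal; (ii) the potential-theoretic identity $\nabla\cdot\Acal_B^{0}[\phi]=\Scal_B^{0}[\nabla_{\p B}\cdot\phi]$, obtained by surface integration by parts using the tangentiality of $\phi$; (iii) the jump formula for $\p_\nu\Scal_B^{0}$ together with \eqnref{jumpM}. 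Combined, these give $(\tfrac{I}{2}-(\Kcal_B^{0})^*)[\nabla_{\p B}\cdot\phi]=0$, and the classical invertibility of $\tfrac{I}{2}-(\Kcal_B^{0})^*$ (exterior Neumann operator, injective by uniqueness for the exterior Dirichlet problem) forces $\nabla_{\p B}\cdot\phi=0$.

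With surface-divergence-freeness in hand, ingredient (ii) yields $\nabla\cdot\Acal_B^{0}[\phi]\equiv 0$ on $\RR^3\setminus\p B$, so
\[
\nabla\times T=\nabla(\nabla\cdot\Acal_B^{0}[\phi])-\Delta\Acal_B^{0}[\phi]=0 \quad\mbox{in }\RR^3\setminus\p B.
\]
Poincar\'e's lemma on the simply-connected exterior $\RR^3\setminus\overline{B}$ then furnishes $v\in H^1_{loc}$ with $\nabla v=T$; fix its additive constant by requiring $v(\Bx)\to 0$ as $\|\Bx\|\to\infty$, and set $u:=w-v$. The $H^1_{loc}$ regularity of $u$ is immediate. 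For the decay $u(\Bx)=\Ocal(\|\Bx\|^{-1})$ I would use the standard multipole expansion of $\Gamma_0$: $\Vcal_{\Omega\setminus\overline{B}}^{0}[\tilde\epsilon\bE](\Bx)=\Ocal(\|\Bx\|^{-1})$ gives $w(\Bx)=\Ocal(\|\Bx\|^{-2})$, while $\Acal_B^{0}[\phi](\Bx)=\Ocal(\|\Bx\|^{-1})$ gives $T(\Bx)=\Ocal(\|\Bx\|^{-2})$, hence $v(\Bx)=\Ocal(\|\Bx\|^{-1})$.

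The main obstacle is the surface-divergence identity for $\Mcal_B^{0}$, which requires careful bookkeeping of the principal value and jump contributions encoded in \eqnref{jumpM}; the remaining steps are routine manipulations in potential theory.
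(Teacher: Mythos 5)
Your setup coincides with the paper's: the decomposition $\mathfrak{K}_1(\epsilon)[\bE]=D^2\Vcal_{\Omega\setminus\overline{B}}^{0}[\tilde\epsilon\bE]-\nabla\times\Acal_B^{0}[\phi]$ (your $\phi$ is the paper's $\mathbf{h}$), the proof that $\nabla_{\p B}\cdot\phi=0$ via $\nabla_{\p B}\cdot\Mcal_B^{0}=-(\Kcal_B^{0})^*\nabla_{\p B}\cdot$ and the invertibility of $\tfrac{I}{2}-(\Kcal_B^{0})^*$, and the conclusion that $T:=\nabla\times\Acal_B^{0}[\phi]$ is curl-free and divergence-free off $\p B$ are all exactly the paper's steps. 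The gap is in the final step: you invoke Poincar\'e's lemma ``on the simply-connected exterior $\RR^3\setminus\overline{B}$''. But $B$ is only assumed to be a bounded Lipschitz domain, and the exterior of such a domain need not be simply connected (take $B$ a solid torus: a loop threading the hole is not contractible in $\RR^3\setminus\overline{B}$). A curl-free field on a non-simply-connected domain is a gradient only if all its periods vanish, and for $T$ this is not automatic --- $T$ has a tangential jump $\phi$ across $\p B$, so a period over a loop bounding a surface that crosses $\p B$ picks up a contribution from $\phi$ that must be shown to cancel. The paper is explicit that this is the whole point of the lemma: the remark immediately following it states that under simple connectedness the conclusion is standard (Theorem 3.37 in Monk), whereas the lemma is proved \emph{without} that hypothesis.

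The paper closes this gap differently: it characterises $\bF=\nabla\times\Acal_B^{0}[\mathbf{h}]$ as a solution of the exterior problem $\nabla\times\bF=0$, $\nabla\cdot\bF=0$ with prescribed tangential trace on $\p B$, zero flux $\int_{\p B}\nu\cdot\bF\,ds=0$, and $\Ocal(\|\Bx\|^{-2})$ decay; it proves uniqueness for this problem by the Stratton--Chu/Helmholtz representation (which forces any homogeneous solution into the form $\nabla\Scal_B^{0}[\nu\cdot\bF_0]$, hence zero), and then exhibits an explicit competitor $\nabla\tilde u$, with $\tilde u$ solving an exterior Dirichlet problem, satisfying the same conditions; uniqueness gives $\bF=\nabla\tilde u$. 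To repair your argument you would either have to restrict to simply connected $B$ (weakening the lemma) or supply a separate verification that all periods of $T$ over 1-cycles of $\RR^3\setminus\overline{B}$ vanish, which is essentially the content the paper's uniqueness argument delivers. The rest of your write-up (the surface-divergence computation, the identity $\nabla\cdot\Acal_B^{0}[\phi]=\Scal_B^{0}[\nabla_{\p B}\cdot\phi]$, and the decay estimates giving $u=\Ocal(\|\Bx\|^{-1})$) is sound and matches the paper.
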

\begin{proof}
We first introduce a tangential vector field $\mathbf{h}$ given by
\beq\label{eq:lerevisetmp01h}
\mathbf{h}:=\Big(\frac{I}{2}+\Mcal_B^{0}\Big)^{-1}\nu\times D^2\Vcal_{\Omega\setminus\overline{B}}^{0}[\tilde\epsilon\bE].
\eeq
Then we have
\beq\label{eq:lerevisetmp01Divh0}
\nabla_{\p B}\cdot \mathbf{h}=0 \quad \mbox{on} \,\, \p B.
\eeq
Indeed, from \eqref{eq:lerevisetmp01h} we find that
$$
\Big(\frac{I}{2}+\Mcal_B^{0}\Big)\mathbf{h} = \nu\times D^2\Vcal_{\Omega\setminus\overline{B}}^{0}[\tilde\epsilon\bE].
$$
Furthermore, by applying $\nabla_{\p B}\cdot$ and using the identities
$$\nabla\times \nabla = \mathbbold{0}\quad\mbox{and}\quad\nabla_{\p B}\cdot\Mcal_B^{0}=-(\Kcal_B^0)^*\nabla_{\p B}\cdot,$$
we see that
\begin{eqnarray*}
({I}/{2}-(\Kcal_B^0)^*)\nabla_{\p B}\cdot \mathbf{h}
&=& \nabla_{\p B}\cdot (I/2+\Mcal_B^{0})\mathbf{h}\cr
&=&\nabla_{\p B}\cdot (\nu\times D^2\Vcal_{\Omega\setminus\overline{B}}^{0}[\tilde\epsilon\bE])\cr
&=&-\nu\cdot \nabla\times \nabla \nabla\cdot\Vcal_{\Omega\setminus\overline{B}}^{0}[\tilde\epsilon\bE]\cr
&=&0.
\end{eqnarray*}
Then the equality \eqref{eq:lerevisetmp01Divh0} follows by noting the fact that
${I}/{2}-(\Kcal_B^0)^*$ is invertible on $H^{1/2}(\p B)$ (see, e.g., \cite{ADM}).

We rewrite $\mathfrak{K}_1(\epsilon)[\bE]$ as follows,
\beq\label{eq:lerevisetmp01}
\mathfrak{K}_1(\epsilon)[\bE]=D^2\Vcal_{\Omega\setminus\overline{B}}^{0}[\tilde\epsilon\bE]-\bF
\eeq
with a vector field $\bF$ given by
\beq\label{eq:lerevisetmp01F}
\bF:=\nabla\times\Acal_B^{0}\mathbf{h} \quad\mbox{in}\,\, \RR^3\setminus\overline{B}.
\eeq
Due to the fact that $\nabla\Gamma_{0}(\Bx)=\Ocal(\|\Bx\|^{-2})$ as $\|\Bx\|\rightarrow \infty$ we have
\beq\label{eq:lerevisetmp01Fdecay}
\bF(\Bx)=\Ocal(\|\Bx\|^{-2})\quad\mbox{ as }\,\, \|\Bx\|\rightarrow \infty.
\eeq
From \eqref{eq:revisele01tmp01} we obtain
\begin{equation*}
\nu\times\bF = \nu\times D^2\Vcal_{\Omega\setminus\overline{B}}^{0}[\tilde\epsilon\bE]\quad \mbox{on}\,\, \p B.
\end{equation*}
Using Stokes' Theorem we obtain
\begin{equation*}
\int_{\p B}\nu\cdot\bF ds = 0.
\end{equation*}
Applying $\nabla\times$ on both sides of \eqnref{eq:lerevisetmp01F}, integrating by parts and using the equality \eqref{eq:lerevisetmp01Divh0}, there holds
\beq\label{eq:lerevisetmp05}
\begin{split}
\nabla\times\bF
=&\nabla\nabla\cdot\Acal_B^0[\mathbf{h}]\\
=&\nabla\int_{\p B}\nabla_\Bx\Gamma_0(\Bx-\By)\cdot\mathbf{h}(\By)ds_\By \\
=&\nabla\int_{\p B}\Gamma_0(\Bx-\By)\nabla_{\p B}\cdot\mathbf{h}(\By)ds_\By \\
=&0\,\, \mbox{in} \,\, \RR^3\setminus \overline{B}.
\end{split}
\eeq
Using the operator $\nabla\cdot\nabla\times = \mathbbold{0}$, we obtain
\beq\label{eq:lerevisetmp01Fdiv}
\nabla\cdot\bF=0\  \ \mbox{in} \, \, \RR^3\setminus \overline{B}.
\eeq

The exterior boundary value problem \eqref{eq:lerevisetmp01Fdecay}-\eqref{eq:lerevisetmp01Fdiv} has a unique solution.
In fact, letting $\bF_0$ be the solution to
\beq\label{eq:revisenewsys0001}
\left \{
 \begin{array}{ll}
\displaystyle{\nabla\times{\bF_0}=0, \quad \nabla\cdot\bF_0=0},  &\mbox{in} \quad \RR^3\setminus \overline{B},\\
\displaystyle{\nu\times\bF_0|_{\p B}^+=0}, \quad& \displaystyle{\int_{\p B}\nu\cdot \bF_0 ds=0},\medskip\\
\bF_0=\Ocal(\|\Bx\|^{-2}), & \|\Bx\|\rightarrow \infty,
 \end{array}
 \right .
\eeq
then one has $\Delta\bF_0=0$ in $\RR^3\setminus\overline{B}$ and thus $\bF_0$ has
the following Helmholtz decomposition (see, e.g., \cite{Gri99}) fomula
\begin{equation*}
\bF_0=\nabla v +\nabla\times \Phi \quad \mbox{in} \,\, \RR^3\setminus \overline{B},
\end{equation*}
where
\begin{equation*}
v:=-\Vcal_{\RR^3\setminus\overline{B}}^0[\nabla\cdot\bF_0]+\Scal_B^0 [\nu\cdot\bF_0],
\end{equation*}
and
\beq\label{eq:lerevisetmp0004}
\Phi:=\Vcal_{\RR^3\setminus\overline{B}}^0[\nabla\times\bF_0]-\Acal_B^0 [\nu\times\bF_0].
\eeq
By using \eqnref{eq:revisenewsys0001}-\eqnref{eq:lerevisetmp0004} one has
\beq\label{eq:lerevisetmp0005}
\bF_0=\nabla\Scal_B^0 [\nu\cdot\bF_0].
\eeq
The boundary condition $\nu\times\bF_0=0$ on $\p B$ then requires that $\Scal_B^0[\nu\cdot\bF_0]=C_1$ on $\p B$ for some constant $C_1$. Finally, by using
$\int_{\p B} \nu\cdot \bF_0 ds=0$, one has
\begin{equation*}
\begin{split}
\int_{\RR^3\setminus\overline{B}}|\nabla\Scal_B^0 [\nu\cdot\bF_0]|^2 dx
=&-\int_{\p B} \nu\cdot\nabla\Scal_B^0 [\nu\cdot\bF_0]|_+ \Scal_B^0 [\nu\cdot\bF_0]ds\\
=&C_1\int_{\p B}\nu\cdot\bF_0 ds=0.
\end{split}
\end{equation*}
Thus $\nabla\Scal_B^0 [\nu\cdot\bF_0]=0$ and by \eqnref{eq:lerevisetmp0005} one has $\bF_0=0$, which shows the uniqueness of solution to the exterior boundary value problem
\eqref{eq:lerevisetmp01Fdecay}-\eqref{eq:lerevisetmp01Fdiv}.

On the other hand, let $\tilde{u}$ be the unique solution to the following system
\begin{equation*}
\left \{
 \begin{array}{ll}
\Delta\tilde u=0  &\mbox{in}\,\, \RR^3\setminus \overline{B},\\
 \tilde{u}|_+=\nabla\cdot\Vcal_{\Omega\setminus\overline{B}}^{0}[\tilde\epsilon\bE]|_+-C_2 & \mbox{on} \,\, \p B,\\
\tilde{u}=\Ocal(\|\Bx\|^{-1}) &\mbox{as} \,\,  \|\Bx\|\rightarrow \infty,
 \end{array}
 \right .
\end{equation*}
where $C_2:=\int_{\p B}(\Scal_B^0)^{-1}[\nabla\cdot\Vcal_{\Omega\setminus\overline{B}}^{0}[\tilde\epsilon\bE]]\,ds /\int_{\p B}(\Scal_B^0)^{-1}[1]\,ds$.
One can easily see that $\nabla\tilde u$ also satisfies \eqref{eq:lerevisetmp01Fdecay}-\eqref{eq:lerevisetmp01Fdiv}. Thus $\bF=\nabla\tilde u$.

The proof is complete.
\end{proof}
\begin{rem}
We mention that one can also find similar results for the second term of RHS of \eqnref{eq:lerevisetmp01} in Lemma 5.5 in \cite{ADM} and \cite{griesmaier2008asymptotic}. Additionally, if $B$ is a simply connected domain, or if $\Omega\setminus\overline{B}$ is a simply connected domain, then one can directly have that $\mathfrak{K}_1(\epsilon)[\bE]$ is a gradient field (see, e.g., Theorem 3.37 in \cite{Mon03})). However, in Lemma~\ref{le:revisele02}, we do not require the simple connectedness and moreover, we derive the asymptotic decaying condition of the gradient field at infinity, which shall also be need in our subsequent analysis.
\end{rem}
\begin{lem}\label{le:00}
There holds the following identity
\beq\label{eq:le0001}
\mathfrak{K}_2(\epsilon)=0.
\eeq
\end{lem}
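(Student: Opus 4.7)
The plan is to show directly that $\mathfrak{K}_2(\epsilon)[\bE]$ vanishes as a consequence of the surface divergence-free property already derived in the proof of Lemma~\ref{le:revisele02}. Writing out the definition, one has
\[
\mathfrak{K}_2(\epsilon)[\bE]=-i\mu_0^{-1}D^2\Acal_B^0[\mathbf{h}],
\]
where $\mathbf{h}:=\bigl(\tfrac{I}{2}+\Mcal_B^0\bigr)^{-1}\nu\times D^2\Vcal_{\Omega\setminus\overline{B}}^{0}[\tilde\epsilon\bE]$ is exactly the tangential field introduced in \eqref{eq:lerevisetmp01h}. The central point is that $\mathbf{h}\in \mathrm{TH}(\mathrm{div},\p B)$ satisfies $\nabla_{\p B}\cdot\mathbf{h}=0$ by \eqref{eq:lerevisetmp01Divh0}, so the rest of the argument is just an application of this identity.

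Next I would unpack $D^2=\nabla\nabla\cdot$ and move the divergence into the surface integral. For $\Bx\in\RR^3\setminus\p B$,
\[
\nabla\cdot\Acal_B^0[\mathbf{h}](\Bx)
=\int_{\p B}\nabla_\Bx\Gamma_0(\Bx-\By)\cdot\mathbf{h}(\By)\,ds_\By
=-\int_{\p B}\nabla_\By\Gamma_0(\Bx-\By)\cdot\mathbf{h}(\By)\,ds_\By.
\]
Since $\mathbf{h}$ is tangential to $\p B$, the gradient $\nabla_\By\Gamma_0(\Bx-\By)$ on the right may be replaced by its tangential component $\nabla_{\p B,\By}\Gamma_0(\Bx-\By)$, and then surface integration by parts yields
\[
\nabla\cdot\Acal_B^0[\mathbf{h}](\Bx)
=\int_{\p B}\Gamma_0(\Bx-\By)\,\nabla_{\p B}\cdot\mathbf{h}(\By)\,ds_\By=0,
\]
using \eqref{eq:lerevisetmp01Divh0}. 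Taking the gradient in $\Bx$ then gives $D^2\Acal_B^0[\mathbf{h}]=0$, hence $\mathfrak{K}_2(\epsilon)[\bE]=0$. (Equivalently, one observes that this is essentially the computation already carried out in \eqref{eq:lerevisetmp05}, where the same manipulation was used to verify $\nabla\times\bF=0$; the displayed identity there gives $\nabla\nabla\cdot\Acal_B^0[\mathbf{h}]=0$ in $\RR^3\setminus\overline{B}$.)

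Since the identity is pointwise and holds for arbitrary $\bE$ in the relevant function space, one concludes that $\mathfrak{K}_2(\epsilon)=0$ as an operator. The only real step is the surface integration by parts, and there is no genuine obstacle here beyond invoking Lemma~\ref{le:revisele02}; the main subtlety worth emphasising is that one must work with $\nabla_{\p B}$ rather than the full $\nabla_\By$, which is legitimate precisely because $\mathbf{h}$ is tangential.
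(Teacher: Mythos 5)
Your proof is correct and follows essentially the same route as the paper: the authors simply cite \eqref{eq:lerevisetmp05}, which is exactly the computation you carry out (surface integration by parts against the tangential density $\mathbf{h}$, then invoking $\nabla_{\p B}\cdot\mathbf{h}=0$ from \eqref{eq:lerevisetmp01Divh0}). You merely write out in full the step the paper leaves implicit, and you correctly note the equivalence yourself.
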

\begin{proof}
\eqnref{eq:le0001} is a straightforward result from \eqnref{eq:lerevisetmp05}.
\end{proof}

\begin{lem}
Let $\Phi\in \rm{TH}(\rm{curl}, \p B)$, then there holds the following identity
\beq\label{eq:le0301}
D^2\Acal_B^{0}({I}/{2}+\Mcal_B^{0})^{-1}[\nu\times \Phi]=0 \quad \mbox{in}\,\, \RR^3,
\eeq
if and only if $\nabla_{\p B}\cdot (\nu\times\Phi)=0$ holds on $\p B$.
\end{lem}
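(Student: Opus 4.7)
The plan is to reduce the vector identity to a scalar one by the same integration-by-parts trick already used in the proof of Lemma~\ref{le:revisele02}, then invoke the intertwining relation between $\Mcal_B^0$ and $(\Kcal_B^0)^*$ together with invertibility/injectivity properties of the Laplace boundary operators.

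Set $\mathbf{h}:=\bigl(I/2+\Mcal_B^{0}\bigr)^{-1}[\nu\times\Phi]$, which lies in $\mathrm{TH}(\mathrm{div},\p B)$ and is tangential. Since $D^2=\nabla\nabla\cdot$ and $\mathbf{h}$ is tangential, the same computation as in the chain of equalities leading to \eqnref{eq:lerevisetmp05} yields, for every $\Bx\in\RR^3\setminus\p B$,
\begin{equation*}
D^2\Acal_B^{0}[\mathbf{h}](\Bx)=\nabla\nabla\cdot\Acal_B^{0}[\mathbf{h}](\Bx)=\nabla\Scal_B^{0}\bigl[\nabla_{\p B}\cdot\mathbf{h}\bigr](\Bx).
\end{equation*}
Hence the identity \eqnref{eq:le0301} is equivalent to $\nabla\Scal_B^{0}[\nabla_{\p B}\cdot\mathbf{h}]\equiv 0$ in $\RR^3$. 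At the same time, applying $\nabla_{\p B}\cdot$ to the defining equation for $\mathbf{h}$ and using the intertwining identity $\nabla_{\p B}\cdot\Mcal_B^{0}=-(\Kcal_B^{0})^*\nabla_{\p B}\cdot$ (already invoked in the proof of Lemma~\ref{le:revisele02}) gives
\begin{equation*}
\bigl(I/2-(\Kcal_B^{0})^*\bigr)\bigl[\nabla_{\p B}\cdot\mathbf{h}\bigr]=\nabla_{\p B}\cdot(\nu\times\Phi)\quad\text{on }\p B.
\end{equation*}

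For the ``if'' direction, assuming $\nabla_{\p B}\cdot(\nu\times\Phi)=0$ and using the invertibility of $I/2-(\Kcal_B^{0})^*$ on $H^{-1/2}(\p B)$ I conclude $\nabla_{\p B}\cdot\mathbf{h}=0$, hence $\Scal_B^{0}[\nabla_{\p B}\cdot\mathbf{h}]=0$ and \eqnref{eq:le0301} follows. For the ``only if'' direction, \eqnref{eq:le0301} forces $\Scal_B^{0}[\nabla_{\p B}\cdot\mathbf{h}]$ to be constant throughout $\RR^3$; since in $3$D this single layer potential satisfies $\Scal_B^{0}[\nabla_{\p B}\cdot\mathbf{h}](\Bx)=\Ocal(\|\Bx\|^{-1})$ as $\|\Bx\|\rightarrow\infty$, the constant must vanish, and then injectivity of $\Scal_B^{0}:H^{-1/2}(\p B)\to H^{1/2}(\p B)$ (standard for Lipschitz boundaries in $\RR^3$) yields $\nabla_{\p B}\cdot\mathbf{h}=0$. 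Substituting back into the intertwining identity gives $\nabla_{\p B}\cdot(\nu\times\Phi)=0$.

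The only real subtlety I foresee is justifying the last step rigorously: one must ensure that $\nabla_{\p B}\cdot\mathbf{h}$ is indeed an element of the domain on which $\Scal_B^{0}$ is injective, and that the conclusion ``constant harmonic function decaying at infinity equals zero'' applies here. Both hinge on the mapping properties of $(I/2+\Mcal_B^{0})^{-1}$ on $\mathrm{TH}(\mathrm{div},\p B)$ and the standard Verchota-type isomorphism theorem for the Laplace single layer on Lipschitz boundaries in three dimensions, which are available in the references already cited in the paper (\cite{ADM,T}).
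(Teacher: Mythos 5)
Your proof is correct and takes essentially the same route as the paper's: both define $\mathbf{h}=(I/2+\Mcal_B^{0})^{-1}[\nu\times\Phi]$, reduce $D^2\Acal_B^{0}[\mathbf{h}]$ to $\nabla\Scal_B^{0}[\nabla_{\p B}\cdot\mathbf{h}]$ by surface integration by parts, use the intertwining identity $\nabla_{\p B}\cdot\Mcal_B^{0}=-(\Kcal_B^{0})^*\nabla_{\p B}\cdot$ with the invertibility of $I/2-(\Kcal_B^{0})^*$, and in the converse direction kill the constant via the decay of the single layer potential and its injectivity. The only cosmetic difference is the function space on which you invoke invertibility of $I/2-(\Kcal_B^{0})^*$ ($H^{-1/2}(\p B)$ versus the paper's $H^{1/2}(\p B)$), which does not affect the argument.
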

\begin{proof}
Define
\begin{eqnarray*}
\mathbf{h}:=({I}/{2}+\Mcal_B^{0})^{-1}[\nu\times \Phi],
\end{eqnarray*}
Then
$$
\Big({I}/{2}+\Mcal_B^{0}\Big)\mathbf{h} = \nu\times \Phi.
$$
Furthermore, by applying $\nabla_{\p B}\cdot$ and using the identity $\nabla_{\p B}\cdot\Mcal_B^{0}=-(\Kcal_B^0)^*\nabla_{\p B}\cdot$, we obtain that
\begin{eqnarray*}
\Big({I}/{2}-(\Kcal_B^0)^*\Big)\nabla_{\p B}\cdot \mathbf{h}
= \nabla_{\p B}\cdot (I/2+\Mcal_B^{0})\mathbf{h}
= \nabla_{\p B}\cdot (\nu\times \Phi).
\end{eqnarray*}
Note that ${I}/{2}-(\Kcal_B^0)^*$ is invertible on $H^{1/2}(\p B)$, we conclude that
\beq\label{eq:revisetmp0011}
\nabla_{\p B}\cdot (\nu\times \Phi)=0\quad\mbox{if and only if}\quad \nabla_{\p B}\cdot \mathbf{h}=0.
\eeq

From the relation $\nabla_{\bx}\Gamma_{0}(\bx,\by)=-\nabla_{\by}\Gamma_{0}(\bx,\by)$, by Gauss' surface divergence theorem we have
\begin{eqnarray}\label{eq:revisetmp0012}
D^2\Acal_B^{0}({I}/{2}+\Mcal_B^{0})^{-1}[\nu\times \Phi]
&=& \nabla\nabla\cdot\int_{\p B}\Gamma_0(x,y)\mathbf{h}(\by)ds_{\by}\cr
&=& \nabla\int_{\p B}\nabla_{\bx}\Gamma_0(x,y)\cdot\mathbf{h}(\by)ds_{\by}\cr
&=& -\nabla\int_{\p B}\nabla_{\by}\Gamma_0(x,y)\cdot\mathbf{h}(\by)ds_{\by}\cr
&=& \nabla\int_{\p B}\Gamma_0(x,y)\nabla_{\by}\cdot\mathbf{h}(\by)ds_{\by}\cr
&=& \nabla\Scal_B^0[\nabla_{\p B}\cdot\mathbf{h}].
\end{eqnarray}

If  $\nabla_{\p B}\cdot (\nu\times\Phi)=0$, then the equality \eqnref{eq:le0301} follows directly from \eqref{eq:revisetmp0011}-\eqref{eq:revisetmp0012}.

One the other hand, if \eqnref{eq:le0301} holds then the equality \eqref{eq:revisetmp0012} implies
$$
\nabla\Scal_B^0[\nabla_{\p B}\cdot\mathbf{h}]=0\quad \mbox{in}\,\, \RR^3,
$$
and thus $\Scal_B^0[\nabla_{\p B}\cdot\mathbf{h}]=C$ in $\RR^3$ for some constant $C\in \mathbb{C}$. The decay property of single layer potential shows that $C=0$. The invertibility of single layer potential then shows that $\nabla_{\p B}\cdot\mathbf{h}=0$. Then $\nabla_{\p B}\cdot (\nu\times\Phi)=0$ follows by using the result given in \eqref{eq:revisetmp0011}.
The proof is complete.
\end{proof}

By using \eqnref{eq:defopcon01} and \eqnref{eq:le0301} one can easily obtain that
\beq\label{eq:revise001}
\mathfrak{C}_2[\mathbf{p}]=0.
\eeq

Before deriving the full asymptotic expansions of electric field and magnetic filed with respect to the frequency $\omega$, we need to consider the solvability of the operator equations related to $I_3-\mathfrak{K}_1(\epsilon)$ in an appropriate space. We have the following result
\begin{lem}\label{le:01}
Let $\tilde{\epsilon}$  be defined in \eqnref{eq:defnew}. Suppose further that
\beq\label{eq:le01revise001}
\epsilon= \epsilon_1 \quad \mbox{on}\,\, \p B,
\eeq
for some constant $\epsilon_1$.
 For any $\bF\in H_{loc}(\rm{curl}, \RR^3\setminus\overline{B}))\cap \mbox{H}_{loc}(\rm{div(\tilde\epsilon\cdot)}, \RR^3\setminus\overline{B}))$ and $\nu\times\bF=0$ on $\p B$, the operator equation
\beq\label{eq:le01revise01}
(I_3-\mathfrak{K}_1(\epsilon))[\bE]=\bF
\eeq
is uniquely solvable in $H_{loc}(\mbox{curl}; \RR^3\setminus\overline{B})$.
\end{lem}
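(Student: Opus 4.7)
The plan is to recast the integral equation $(I_3-\mathfrak{K}_1(\epsilon))[\bE]=\bF$ as an elliptic transmission boundary value problem for a scalar potential $u$, and to invoke standard well-posedness of the latter.

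First, I would invoke Lemma~\ref{le:revisele02} to write $\mathfrak{K}_1(\epsilon)[\bE]=\nabla u$ with $u\in H^1_{\mathrm{loc}}(\RR^3\setminus\overline{B})$ and $u(\Bx)=\Ocal(\|\Bx\|^{-1})$ at infinity. The identity $\nu\times\mathfrak{K}_1(\epsilon)[\bE]=0$ on $\partial B$ from \eqnref{eq:revisele01tmp01} forces $u$ to be constant on $\partial B$. Unpacking the representation $u=\nabla\cdot\Vcal_{\Omega\setminus\overline{B}}^0[\tilde\epsilon\bE]-\tilde u$ from the proof of Lemma~\ref{le:revisele02} and using $\Delta\tilde u=0$, one deduces
\begin{equation*}
-\Delta u=\nabla\cdot(\tilde\epsilon\bE)\ \mbox{ in }\Omega\setminus\overline{B},\qquad \Delta u=0\ \mbox{ in }\RR^3\setminus\overline{\Omega}.
\end{equation*}
Substituting $\bE=\bF+\nabla u$ into the first equation yields the scalar transmission problem
\begin{equation*}
\nabla\cdot(\epsilon\nabla u)=-\epsilon_0\nabla\cdot(\tilde\epsilon\bF)\ \mbox{ in }\Omega\setminus\overline{B},\quad \Delta u=0\ \mbox{ in }\RR^3\setminus\overline{\Omega},
\end{equation*}
with $u$ constant on $\partial B$, the natural transmission conditions expressing the continuity of $u$ and of $(\epsilon\nabla u+\epsilon_0\tilde\epsilon\bF)\cdot\nu$ across $\partial\Omega$, and $u=\Ocal(\|\Bx\|^{-1})$ at infinity. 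The assumption $\bF\in H_{\mathrm{loc}}(\mathrm{div}(\tilde\epsilon\cdot))$ places the right-hand side in $L^2$, and $\nu\times\bF=0$ on $\partial B$ is inherited as the PEC compatibility for $\bE=\bF+\nabla u$.

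The next step is to establish well-posedness of this transmission problem. For uniqueness I would set $\bF=0$, denote $C:=u|_{\partial B}$, multiply the homogeneous equation $\nabla\cdot(\epsilon\nabla u)=0$ by $u-C$, and integrate by parts over $B_R\setminus\overline{B}$ for large $R$. The $\partial B$ boundary integral vanishes since $u-C\equiv 0$ there; the $\partial\Omega$ interface integral vanishes by the natural transmission; and the $\partial B_R$ integral tends to zero using $u=\Ocal(\|\Bx\|^{-1})$ and $\nabla u=\Ocal(\|\Bx\|^{-2})$. This yields $\int_{\RR^3\setminus\overline{B}}\epsilon|\nabla u|^2=0$, whence $u$ is constant, and the decay at infinity forces $u\equiv 0$, so $\bE=0$. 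Existence follows from a standard variational formulation in a weighted $H^1$ space using the ellipticity of $\epsilon$; setting $\bE:=\bF+\nabla u$ then produces a candidate solution in $H_{\mathrm{loc}}(\mathrm{curl};\RR^3\setminus\overline{B})$.

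The main obstacle is the converse implication: given the solution $u$ of the PDE, one has to verify that $\bE:=\bF+\nabla u$ genuinely satisfies the integral identity $(I_3-\mathfrak{K}_1(\epsilon))[\bE]=\bF$, equivalently that the scalar potential assigned to $\mathfrak{K}_1(\epsilon)[\bE]$ by Lemma~\ref{le:revisele02} coincides with the $u$ produced by the transmission problem. The natural tool here is the uniqueness portion of Lemma~\ref{le:revisele02}, i.e., uniqueness of the decomposition of a decaying gradient field satisfying the derived Poisson-type equation and the constant boundary value on $\partial B$. The hypothesis $\epsilon|_{\partial B}=\epsilon_1$ enters precisely at this step: combined with $\nu\times\bE=0$ on $\partial B$, it ensures that the trace of $\tilde\epsilon\bE$ and hence the surface density $(\tfrac{I}{2}+\Mcal_B^0)^{-1}[\nu\times D^2\Vcal_{\Omega\setminus\overline{B}}^0(\tilde\epsilon\bE)]$ feeding $\mathfrak{K}_1(\epsilon)$ are consistent with the constant boundary value of $u$ on $\partial B$.
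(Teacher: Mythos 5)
Your overall strategy coincides with the paper's: reduce the operator equation to a scalar exterior elliptic problem for a potential $u$ with $\nabla u=\bE-\bF$, $u$ constant on $\p B$, and decay at infinity. However, there is a genuine gap: the boundary value problem you formulate is under-determined, because you have dropped the total-flux condition
\begin{equation*}
\int_{\p B}\nu\cdot\epsilon\nabla u\,ds=0,
\end{equation*}
which is an essential ingredient of the reduction. Without it the scalar problem is \emph{not} uniquely solvable: take $\epsilon\equiv\epsilon_0$ (so $\mathfrak{K}_1(\epsilon)=0$ and the operator equation is trivially well posed) and $B$ the unit ball; then $u(\Bx)=\|\Bx\|^{-1}$ is harmonic outside $B$, constant on $\p B$, and $\Ocal(\|\Bx\|^{-1})$ at infinity, yet nonzero. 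Your energy argument for uniqueness breaks down at exactly this point: after multiplying by $u-C$, the integral over the large sphere $\{\|\Bx\|=R\}$ contains the term $-C\int_{\{\|\Bx\|=R\}}\nu\cdot\epsilon\nabla u\,ds$, and since $u-C\to -C\neq 0$ while $\int_{\{\|\Bx\|=R\}}\p_\nu u\,ds\to -4\pi a$ (with $a$ the monopole coefficient of $u$), this term does \emph{not} tend to zero from the stated decay rates alone; it vanishes only once the zero-flux condition forces $a=0$.

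This is also where the hypothesis $\epsilon=\epsilon_1$ on $\p B$ actually enters --- not, as you suggest, in checking consistency of the surface density in the converse implication. The paper derives the flux condition directly from the operator equation: since $\bE-\bF=\mathfrak{K}_1(\epsilon)[\bE]$ and $\nabla\cdot\mathfrak{K}_1(\epsilon)[\bE]=0$ inside $B$ (the source $\tilde\epsilon\bE$ being supported in $\Omega\setminus\overline{B}$), one has
\begin{equation*}
\int_{\p B}\nu\cdot\epsilon(\bE-\bF)\,ds=\epsilon_1\int_{\p B}\nu\cdot\mathfrak{K}_1(\epsilon)[\bE]\,ds=\epsilon_1\int_{B}\nabla\cdot\mathfrak{K}_1(\epsilon)[\bE]\,dx=0,
\end{equation*}
where the constancy of $\epsilon$ on $\p B$ is precisely what allows it to be pulled out of the surface integral. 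Once this condition is appended to your transmission problem (and the variational space is taken to consist of functions that are constant on $\p B$, so that zero total flux arises as the natural condition), the rest of your plan --- uniqueness by the energy identity, existence by a variational argument, and the converse verification via the uniqueness statement for the exterior div-curl system established in Lemma~\ref{le:revisele02} --- does go through and matches the paper's proof.
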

\begin{proof}
First, by using the fact that $\nu\times\bF=0$ and $\nu\times\mathfrak{K}_1(\epsilon)[\bE]=0$ on $\p B$, one can easily find that the solution $\bE$, if exists, should satisfy $\nu\times\bE=0$ on $\p B$. Furthermore, by \eqnref{eq:le01revise001} and integration by parts, there holds
\begin{equation*}
\int_{\p B}\nu\cdot\epsilon (\bE-\bF)\,ds=\epsilon_1\int_{\p B} \nu\cdot\mathfrak{K}_1(\epsilon)[\bE]\,ds=\epsilon_1\int_{B} \nabla\cdot\mathfrak{K}_1(\epsilon)[\bE]\,dx=0.
\end{equation*}
One thus needs to prove the unique solvability of the following equation
\beq\label{eq:le01tmp01}
\left\{
\begin{split}
(I_3-\mathfrak{K}_1(\epsilon))[\bE]=\bF,\quad & \mbox{in}\,\, \RR^3\setminus\overline{B}, \\
\nu\times(\bE-\bF)=0\quad& \mbox{on}\,\, \p B,\\
\int_{\p B}\nu\cdot\epsilon (\bE-\bF)\,ds=0,\quad &\\
(\bE-\bF)(\Bx)=\Ocal(\|\Bx\|^{-2})\quad&\mbox{as}\,\, \|\Bx\|\rightarrow \infty.
\end{split}
\right.
\eeq
By using Lemma \ref{le:revisele02} there holds
\beq\label{eq:le01tmp02}
\bE-\bF=\nabla u \quad \mbox{in}\,\, \RR^3\setminus\overline{B}
\eeq
for some $u\in H^1_{loc}(\RR^3\setminus\overline{B})$ and $u(\Bx)=\Ocal(\|\Bx\|^{-1})$ as $\|\Bx\|\rightarrow \infty$.
By taking divergence of both sides of \eqnref{eq:le01revise01}, there further holds
\beq\label{eq:le01tmp03}
\nabla\cdot((1+\tilde\epsilon)\bE-\bF)=0\quad \mbox{in}\,\, \RR^3\setminus\overline{B}.
\eeq
Combining \eqnref{eq:le01tmp02} and \eqnref{eq:le01tmp03}, one can find that \eqnref{eq:le01tmp01} is equivalent to
\beq\label{eq:le01tmp04}
\left\{
\begin{split}
\nabla\cdot(\epsilon\nabla u)=-\epsilon_0\nabla\cdot(\tilde\epsilon\bF)\quad & \mbox{in}\,\,\RR^3\setminus\overline{B}, \\
\nu\times\nabla u=0\quad &\mbox{on} \,\, \p B, \\
\int_{\p B}\nu\cdot\epsilon \nabla u\, ds=0,\quad& \\
u(\Bx)=\Ocal(\|\Bx\|^{-1})\quad &\mbox{as}\,\, \|\Bx\|\rightarrow \infty.
\end{split}
\right.
\eeq
Note that $\nu\times\nabla u=0$ on $\p B$ is equivalent to $u=C$ on $\p B$, where $C$ is a constant.
Thus \eqnref{eq:le01tmp04} is uniquely solvable and so does \eqnref{eq:le01tmp01}.

The proof is complete.
\end{proof}

The following asymptotic expansion results hold immediately by using \eqnref{eq:revise001}, and Lemmas \ref{le:revisele01}, \ref{le:00} and \ref{le:01}.
\begin{lem}
Let $(\bE, \bH)$ be the solution to system \eqnref{eq:pss} and \eqnref{eq:radia2} with $\epsilon$ satisfy \eqnref{eq:le01revise001}.
Then there holds the following in $\Omega\setminus\overline{B}$
  \beq\label{eq:solforPEC01}
  \bE=(I_3-\mathfrak{K}_1(\epsilon))^{-1}\mathfrak{C}_1[\mathbf{p}]
  +i\omega(I_3-\mathfrak{K}_1(\epsilon))^{-1}\mathfrak{C}_1[(\Bx\cdot\mathbf{d})\mathbf{p}]+\Ocal(\omega^2),
  \eeq
  and
  \begin{equation*}
  \bH=i\mathfrak{C}_2[(\Bx\cdot\mathbf{d})\mathbf{p}]+\Ocal(\omega)\quad \mbox{as}\,\,\epsilon\rightarrow 0.
  \end{equation*}
\end{lem}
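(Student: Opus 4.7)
The plan is to combine the low-frequency identities of Lemma~\ref{le:revisele01} with the Taylor expansion of the incident plane wave, and then invoke Lemma~\ref{le:00}, the identity \eqref{eq:revise001}, and the solvability result of Lemma~\ref{le:01} to collapse the two asymptotic equations into the claimed expansions. Since $k_0=\omega\sqrt{\epsilon_0\mu_0}$, expanding $e^{ik_0\Bx\cdot\mathbf{d}}$ in powers of $\omega$ gives, uniformly on compact subsets of $\RR^3$,
\begin{equation*}
\bE^i(\Bx)=\mathbf{p}+i\omega\sqrt{\mu_0}\,(\Bx\cdot\mathbf{d})\mathbf{p}+\Ocal(\omega^2),
\end{equation*}
after absorbing the fixed scalar $1/\sqrt{\epsilon_0}$ into the polarisation vector $\mathbf{p}$.

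First I would treat the electric field. Substituting the above expansion into the first equation of \eqref{eq:reconEq3} and using linearity of $\mathfrak{C}_1$ together with the remainder estimate \eqref{eq:remain01} produces
\begin{equation*}
(I_3-\mathfrak{K}_1(\epsilon))[\bE]=\mathfrak{C}_1[\mathbf{p}]+i\omega\,\mathfrak{C}_1[(\Bx\cdot\mathbf{d})\mathbf{p}]+\Ocal(\omega^2)\quad\mbox{in }\Omega\setminus\overline{B}.
\end{equation*}
By \eqref{eq:remainPE02} and the tangential vanishing of $\mathfrak{C}_1[\bE^i]$ on $\p B$ (which follows from the definition \eqref{eq:defopcon01} and the jump formula \eqref{jumpM}), each term on the right-hand side satisfies the hypotheses of Lemma~\ref{le:01}, so $I_3-\mathfrak{K}_1(\epsilon)$ may be inverted term by term, yielding the claimed expansion of $\bE$.

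For the magnetic field, Lemma~\ref{le:00} reduces the second equation of \eqref{eq:reconEq3} to $\omega\bH=\mathfrak{C}_2[\bE^i]+\mathcal{R}_2(\omega,\bE)$, and feeding in the Taylor expansion of $\bE^i$ gives
\begin{equation*}
\omega\bH=\mathfrak{C}_2[\mathbf{p}]+i\omega\sqrt{\mu_0}\,\mathfrak{C}_2[(\Bx\cdot\mathbf{d})\mathbf{p}]+\Ocal(\omega^2).
\end{equation*}
The delicate point is that dividing by $\omega$ is only legitimate if the $\Ocal(1)$ term on the right vanishes; this is precisely the content of \eqref{eq:revise001}, i.e. $\mathfrak{C}_2[\mathbf{p}]=0$. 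Once this cancellation is applied, dividing through by $\omega$ produces the stated expansion for $\bH$ (with the overall constant $\sqrt{\mu_0}$ absorbed into $\mathbf{p}$ as before).

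The main obstacle is less analytical than bookkeeping: one must track the normalising constants from $\bE^i$ and, more importantly, verify that the accumulated $\Ocal(\omega^2)$ remainder from the plane-wave expansion, together with $\mathcal{R}_1$, still lies in the function class $H_{loc}(\mathrm{curl},\RR^3\setminus\overline{B})\cap H_{loc}(\mathrm{div}(\tilde\epsilon\cdot),\RR^3\setminus\overline{B})$ with vanishing tangential trace on $\p B$, uniformly in $\omega$. This is what legitimises applying $(I_3-\mathfrak{K}_1(\epsilon))^{-1}$ to the remainder and absorbing it into an $\Ocal(\omega^2)$ error in the final formula. The $\bH$-case requires only the one-step cancellation from \eqref{eq:revise001}, which is the essential ingredient promoting the ostensible $\Ocal(\omega^{-1})$ blow-up after division by $\omega$ to a genuine $\Ocal(1)$ leading term.
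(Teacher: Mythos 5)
Your proposal is correct and follows exactly the route the paper intends: the paper offers no written proof beyond the remark that the lemma "holds immediately" from \eqref{eq:revise001} and Lemmas \ref{le:revisele01}, \ref{le:00} and \ref{le:01}, and your argument assembles precisely these ingredients (Taylor expansion of $\bE^i$, term-by-term inversion of $I_3-\mathfrak{K}_1(\epsilon)$ justified by the tangential-trace conditions, and the cancellation $\mathfrak{C}_2[\mathbf{p}]=0$ before dividing by $\omega$). Your explicit tracking of the constants $1/\sqrt{\epsilon_0}$ and $\sqrt{\mu_0}$ and of the uniformity of the remainder is in fact slightly more careful than the paper itself.
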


\section{Unique recovery results}

In this section, we present the main uniqueness results in recovering both the embedded obstacle and its surrounding medium.

In what follows, we let $(\Omega_j\setminus \overline{B_j}, \epsilon_j)$, $j=1,2$, be two EM configurations. For any fixed incident direction $\mathbf{d}$ and polarization $\mathbf{q}$, we let $\bE^{\infty}_j(\hat{\Bx},\omega)$ respectively signify the electric far-field patterns for the system \eqnref{eq:pss}-\eqnref{eq:radia2} associated with $(\Omega_j\setminus \overline{B_j}, \epsilon_j)$, $j=1,2$.


Let $B_{12}^{c}$ be the unbounded connected component of $\RR^3\setminus \overline{B_1\cup B_2}$.
Similarly, let $\Omega_{12}^{c}$ be the unbounded connected component of $\RR^3\setminus \overline{\Omega_1\cup \Omega_2}$.
If $B_1\neq B_2$, we know that either $(\RR^3\setminus \overline{B_{12}^{c}})\setminus\overline{B_1}$ or $(\RR^3\setminus \overline{B_{12}^{c}})\setminus\overline{B_2}$ is nonempty. In such a case, the domains $B_1$ and $B_2$ are said to be {\it admissible} if there exists a connected component, say $B^*$, of $(\RR^3\setminus \overline{B_{12}^{c}})\setminus\overline{B_1}$ or $(\RR^3\setminus \overline{B_{12}^{c}})\setminus\overline{B_2}$
such that the divergence theorem holds in $B^*$. Here, we note that divergence theorem always holds in Lipschitz domains (cf. \cite{Mclean}).
It is easily seen that $\partial B^*$ is composed of finitely many Lipschitz pieces.
Hence, one can see that if $B^*$ can be decomposed into the union of finitely many Lipschitz subdomains,
then the divergence theorem holds in $B^*$ and therefore $B_1$ and $B_2$ are admissible. It is also interesting to note a particular case that if both $B_1$ and $B_2$ are polyhedral domains, then $B^*$ is also a polyhedral domain. Hence, in the polyhedral case, both $B_1$ and $B_2$ are clearly admissible. Throughout the rest of the paper, we assume that, if $B_1\neq B_2$, then $B_1$ and $B_2$ are two admissible PEC obstacles.

\subsection{Recovery of the obstacle}
\begin{thm}\label{thm:mo1}
Assume that the electric permittivities $\epsilon_j$ are constants on $\p B_j, \,j=1,2$. Let $\omega_0$ be a positive number.
For any fixed incident direction $\mathbf{d}$ and polarization $\mathbf{q}$,
 if
\beq\label{eq:th031}
\bE^{\infty}_1(\hat{\Bx},\omega) = \bE^{\infty}_2(\hat{\Bx},\omega)
\eeq
for all observation directions $\hat{\Bx}\in\mathbb{S}^2$ and all frequencies  $\omega\in (0, \omega_0)$,
then we have $B_1=B_2$.
\end{thm}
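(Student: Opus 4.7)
The plan is to combine Rellich's lemma and analyticity in $\omega$ with the low-frequency expansion (\ref{eq:solforPEC01}) to reduce the uniqueness of the obstacle to a uniqueness question for an exterior electrostatic problem with perfect-conductor boundary data. First, by the joint real-analyticity of $\bE^{\infty}$ in its arguments, the equality (\ref{eq:th031}) extends to all frequencies in a complex neighbourhood of $(0,\omega_0)$. Rellich's lemma for the Maxwell system on the unbounded connected component $\Omega_{12}^{c}$ of $\RR^{3}\setminus\overline{\Omega_1\cup\Omega_2}$---on which both $(\bE_j,\bH_j)$ satisfy the \emph{same} free-space Maxwell system with the same incident plane wave---then yields $\bE_1=\bE_2$ throughout $\Omega_{12}^{c}$ for every $\omega\in(0,\omega_0)$.

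Next I would match the leading-order coefficients in (\ref{eq:solforPEC01}): equating $\omega^{0}$ terms gives $\bE_1^{(0)}=\bE_2^{(0)}$ on $\Omega_{12}^{c}$, where $\bE_j^{(0)}:=(I_3-\mathfrak{K}_1(\epsilon_j))^{-1}\mathfrak{C}_1[\mathbf{p}]$. Combining Lemma \ref{le:revisele02} with the gradient-field argument appearing in the proof of Lemma \ref{le:01}, each $\bE_j^{(0)}=\nabla U_j$, where $U_j\in H^{1}_{\mathrm{loc}}(\RR^{3}\setminus\overline{B_j})$ is the unique solution of the exterior conductor problem
\begin{equation*}
\nabla\cdot(\epsilon_j\nabla U_j)=0\;\;\text{in}\;\;\RR^{3}\setminus\overline{B_j},\quad U_j|_{\p B_j}=\mathrm{const},\quad\int_{\p B_j}\nu\cdot\epsilon_j\nabla U_j|_+\,ds=0,
\end{equation*}
normalized so that $U_j(\Bx)-\mathbf{p}\cdot\Bx/\sqrt{\epsilon_0}=\mathcal{O}(\|\Bx\|^{-1})$ as $\|\Bx\|\to\infty$. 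Hence $U_1=U_2$ on $\Omega_{12}^{c}$ after absorbing an additive constant.

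Finally, assuming for contradiction that $B_1\neq B_2$, admissibility furnishes a connected component $B^{*}$, say of $(\RR^{3}\setminus\overline{B_{12}^{c}})\setminus\overline{B_1}$, in which $U_1$ is defined and satisfies $\nabla\cdot(\epsilon_1\nabla U_1)=0$. Smoothness of $U_1$ across $\p B_2$, together with $U_2|_{\p B_2}=\mathrm{const}$ and $U_1=U_2$ on the exterior trace of $\p B_2\cap\p B^{*}$, forces $U_1\equiv c_2$ on $\p B_2\cap\p B^{*}$, while the PEC condition gives $U_1\equiv c_1$ on $\p B_1\cap\p B^{*}$. Integrating by parts on $B^{*}$ (valid by admissibility) and combining the local flux identity $\int_{\p B^{*}}\epsilon_1\p_{\nu}U_1\,ds=0$ with the global zero-charge condition on $\p B_1$ should then force $\int_{B^{*}}\epsilon_1|\nabla U_1|^{2}\,dx=0$, i.e.\ $\nabla U_1\equiv 0$ on $B^{*}$; unique continuation then propagates $U_1\equiv\mathrm{const}$ to all of $\RR^{3}\setminus\overline{B_1}$, contradicting the prescribed linear growth at infinity (for $\mathbf{p}\neq 0$, which may be arranged without loss of generality). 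The hard part will be precisely this last step: in a general admissible geometry one must carefully connect the local flux on $\p B^{*}$ to the \emph{global} zero-charge condition on $\p B_1$. The admissibility hypothesis is what validates the divergence theorem on $B^{*}$ (and also permits the symmetric argument when $B^{*}$ is chosen from the other family $(\RR^{3}\setminus\overline{B_{12}^{c}})\setminus\overline{B_2}$), while the assumption $\epsilon_j|_{\p B_j}=\mathrm{const}$ is what makes the leading-order electrostatic problem well-posed via Lemma \ref{le:01}.
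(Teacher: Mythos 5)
Your overall strategy (Rellich's lemma, low-frequency expansion, energy identity on $B^*$) is the right family of ideas, but there are two genuine gaps, and the paper's proof is structured precisely to avoid both. First, you base the argument on the leading-order \emph{electric} field $\bE_j^{(0)}=(I_3-\mathfrak{K}_1(\epsilon_j))^{-1}\mathfrak{C}_1[\mathbf{p}]$, which depends on $\epsilon_j$ and whose potential $U_j$ solves $\nabla\cdot(\epsilon_j\nabla U_j)=0$ in $\Omega_j\setminus\overline{B_j}$. Rellich's lemma only gives $U_1=U_2$ on $\Omega_{12}^{c}$, i.e.\ outside both media. To run your contradiction on $B^*$ you need $U_1=U_2$ in a neighbourhood of $\p B_2\cap\p B^*$, which lies deep inside $\Omega_1\cup\Omega_2$, where $U_1$ and $U_2$ satisfy \emph{different} divergence-form equations with possibly non-smooth coefficients (the theorem does not assume $\epsilon_1=\epsilon_2$, nor even $\Omega_1=\Omega_2$); the difference $U_1-U_2$ satisfies no single PDE and the fields need not be real-analytic there, so the continuation step fails. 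This is exactly why the paper works instead with the leading-order \emph{magnetic} field $\bH_{j,0}=i\mathfrak{C}_2^{(j)}[(\Bx\cdot\mathbf{d})\mathbf{p}]$: by Lemma~\ref{le:00} ($\mathfrak{K}_2(\epsilon)=0$) it is independent of $\epsilon_j$ and harmonic in all of $\RR^3\setminus\overline{B_j}$, so the equality propagates from $\Omega_{12}^{c}$ to $B_{12}^{c}$ by analytic continuation, and the obstacle is genuinely decoupled from the unknown medium.

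Second, even granting $U_1=U_2$ near $\p B_2$, your energy argument does not close. With constant Dirichlet data $c_1$ on $\p B_1\cap\p B^*$ and $c_2$ on $\p B_2\cap\p B^*$, integration by parts gives $\int_{B^*}\epsilon_1|\nabla U_1|^2\,dx=(c_1-c_2)\int_{\p B_1\cap\p B^*}\epsilon_1\,\p_\nu U_1\,ds$ after using that the total flux over $\p B^*$ vanishes; the remaining boundary term is a flux over only a \emph{piece} of $\p B_1$ and is not controlled by the global zero-charge condition $\int_{\p B_1}\nu\cdot\epsilon_1\nabla U_1\,ds=0$. You flag this as ``the hard part,'' and indeed as written it does not go through --- this is the classical difficulty with Dirichlet-type data in obstacle-uniqueness energy arguments. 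The paper's potentials $\tilde u_j$ from \eqnref{eq:reconobrevise0041} instead satisfy the homogeneous \emph{Neumann} condition $\nu\cdot\nabla\tilde u_j|_+=0$ on $\p B_j$ (see \eqnref{eq:reconobrevise0043}), so every boundary term in the energy identity on $B^*$ vanishes outright, $\tilde u_1$ is constant on $B^*$ and hence on $B_{12}^{c}$ by unique continuation, contradicting the linear term $\mu_0^{-1}(\mathbf{d}\times\mathbf{p})\cdot\Bx$. To repair your proof you would have to switch to the magnetic field (or otherwise produce Neumann data on $\p B^*$), at which point you recover the paper's argument.
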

\begin{proof}
For simplicity, we set $\bH_{j,0}:=i\mathfrak{C}_2^{(j)}[(\Bx\cdot\mathbf{d})\mathbf{p}]$ in $\RR^3\setminus\overline{B_j}$, $j=1, 2$.
Since by Rellich's lemma \cite{CK}, the far-field pattern uniquely determine the scattered field,
we deduce from the assumption \eqnref{eq:th031} that $\bH_1=\bH_2$ in $\Omega_{12}^{c}$ and thus
\beq\label{eq:HjequalOmege12}
\bH_{1,0}=\bH_{2,0}\quad \mbox{on}\,\, \Omega_{12}^{c}.
\eeq
Recall the definition of $\mathfrak{C}_2^{(j)}$ in \eqnref{eq:defopcon01} one has
\beq\label{eq:le32revise01}
\mathfrak{C}_2^{(j)}[(\Bx\cdot\mathbf{d})\mathbf{p}]=i\mu_0^{-1}D^2\Acal_{B_j}^{0}\Big(\frac{I}{2}+\Mcal_{B_j}^{0}\Big)^{-1}[(\Bx\cdot\mathbf{d})\nu\times\mathbf{p}]
-i\mu_0^{-1}\mathbf{d}\times\mathbf{p}.
\eeq
Straightforward calculations show that $\Delta \bH_{j,0}= 0$ in $\RR^3\setminus\overline{B_j}$, which in turn implies that $\bH_{j,0}$ is analytic in $\RR^3\setminus\overline{B_j},\,j=1,2$.
From \eqref{eq:HjequalOmege12}, by analytic continuation, we have
\beq\label{eq:revisethob001}
\bH_{1,0}=\bH_{2,0}\quad \mbox{on}\,\, B^c_{12}.
\eeq
This further implies $\nu\times \bH_{1,0}=\nu\times\bH_{2,0}$ on $\p B^c_{12}$.

If $B_1\neq B_2$, without loss of generality we assume that $B^*:=(\RR^3\setminus\overline{B_{12}^c})\setminus{B_1}$ and $B^*$ is nonempty.
By \eqnref{eq:le32revise01} one has
\begin{equation*}
\bH_{j,0}=\nabla \tilde u_j \quad\mbox{in} \,\, B_{12}^{c},
\end{equation*}
where
\beq\label{eq:reconobrevise0041}
\begin{split}
\tilde u_j:=&\mu_0^{-1}(\mathbf{d}\times\mathbf{p})\cdot\Bx -\mu_0^{-1}\nabla\cdot\Acal_{B_j}^{0}\Big(\frac{I}{2}+\Mcal_{B_j}^{0}\Big)^{-1}[(\Bx\cdot\mathbf{d})\nu\times\mathbf{p}]\\
=&\mu_0^{-1}(\mathbf{d}\times\mathbf{p})\cdot\Bx -\mu_0^{-1}\Scal_{B_j}^{0}\Big(-\frac{I}{2}+(\Kcal_{B_j}^{0})^*\Big)^{-1}[(\mathbf{d}\times\mathbf{p})\cdot\nu ], \quad j=1,2.
\end{split}
\eeq
From this, using jump relations, we find that
\beq\label{eq:reconobrevise0043}
\nu\cdot\nabla\tilde u_j|_+=0 \quad \mbox{on}\,\, \p B_j,\quad j=1,2.
\eeq
By \eqref{eq:revisethob001}-\eqref{eq:reconobrevise0043} we see that $\tilde{u}_1$ is a harmonic function in $B^*$ and satisfies the homogeneous Neumann boundary condition
$\nu\cdot\nabla\tilde u_1=0$ on $\p B^*$.
 By the Gauss divergence theorem, we have $\tilde u_1=C$ for some constant $C\in\mathbb{C}$ in $B^*$. Using the unique continuation of the harmonic function $\tilde u_1$, one has $\tilde u_1=C$ in $B_{12}^c$, which is a contradiction to \eqnref{eq:reconobrevise0041}. Thus $B_1=B_2$.

 The proof is complete.
\end{proof}

We would like to remark that the unique determination of the perfect conductor $B$ has been proved as long as the electric permittivity $\epsilon$ is a constant on $\p B$. That is, $\epsilon$ could be a variable function inside $\Omega\backslash\overline{B}$.

\subsection{Recovery of the surrounding medium}

In this subsection, we show that if $\epsilon$ is a constant in $\Omega\setminus\overline{B}$, then it can also be uniquely determined by using the same far-field data as those in the previous subsection. First, by Theorem~\ref{thm:mo1}, we readily have that embedded obstacle is uniquely recovered, namely $B=B_1=B_2$. Assume further that $\Om=\Om_1=\Om_2$. Before proceeding with further analysis, we recall the following inner transmission condition result (see \cite{DHU:17}),
\begin{lem}
Let $(\bE, \bH)$ be the solution to \eqnref{eq:pss} and \eqnref{eq:radia2}. Then there holds the following transmission condition, i.e.,
\beq\label{eq:addinnertran1}
\nu\cdot \epsilon\bE |_+= \nu\cdot \epsilon\bE |_- \quad \mbox{on}\,\, \p \Omega,
\eeq
\end{lem}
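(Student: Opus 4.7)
The plan is to derive the transmission identity directly from Ampere's law in \eqref{eq:pss} by taking its distributional divergence. The second equation of \eqref{eq:pss} reads $\nabla\times\bH=-i\omega\epsilon\bE$ in $\RR^3\setminus\overline{B}$, where $\epsilon$ equals the exterior constant $\epsilon_0$ on $\RR^3\setminus\overline{\Omega}$ and equals the (possibly variable) medium permittivity on $\Omega\setminus\overline{B}$. Applying the distributional divergence to both sides and using $\nabla\cdot\nabla\times\equiv 0$, together with $\omega\neq 0$, yields
\begin{equation*}
\nabla\cdot(\epsilon\bE)=0 \quad\text{in }\RR^3\setminus\overline{B},
\end{equation*}
understood in the sense of distributions on $\RR^3\setminus\overline{B}$.

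The second step is to read off the normal-trace continuity from this global divergence-free identity. Since $\bH\in H_{\mathrm{loc}}(\mathrm{curl};\RR^3\setminus\overline{B})$, one has $\nabla\times\bH\in (L^2_{\mathrm{loc}})^3$, and hence $\epsilon\bE\in H_{\mathrm{loc}}(\mathrm{div};\RR^3\setminus\overline{B})$. Restricting to the two subdomains $\Omega\setminus\overline{B}$ and $\RR^3\setminus\overline{\Omega}$, the normal traces $\nu\cdot(\epsilon\bE)|_{\pm}$ are well-defined elements of $H^{-1/2}(\p\Omega)$ from each side. Testing the identity $\nabla\cdot(\epsilon\bE)=0$ against an arbitrary scalar $\varphi\in C_c^\infty(\RR^3\setminus\overline{B})$ whose support crosses $\p\Omega$, and integrating by parts on each side separately, the interior bulk contributions vanish since $\nabla\cdot(\epsilon\bE)=0$ classically on each phase, and the remaining boundary contribution reads
\begin{equation*}
\bigl\langle \nu\cdot(\epsilon\bE)|_+-\nu\cdot(\epsilon\bE)|_-,\,\varphi\bigr\rangle_{\p\Omega}=0,
\end{equation*}
from which \eqref{eq:addinnertran1} follows by the arbitrariness of $\varphi$.

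The only delicate point is that $\bE$ itself only lies in $H_{\mathrm{loc}}(\mathrm{curl})$, not in $H^1$ or a priori in $H(\mathrm{div})$, so the boundary values of $\nu\cdot(\epsilon\bE)$ must be interpreted as the $H^{-1/2}$ normal trace of an $H(\mathrm{div})$ vector field rather than as pointwise limits. Once the Ampere equation is used to upgrade the divergence regularity of $\epsilon\bE$, the transmission identity is essentially a one-line consequence of the global vanishing of $\nabla\cdot(\epsilon\bE)$ across the interface $\p\Omega$.
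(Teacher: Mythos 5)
Your argument is correct. Note that the paper does not give its own proof of this lemma at all: it simply cites \cite{DHU:17} and moves on, so there is no internal argument to compare against, and your derivation supplies the missing (standard) proof. The key step is exactly the right one: from the Amp\`ere equation $\nabla\times\bH+i\omega\epsilon\bE=0$ in $\RR^3\setminus\overline{B}$ and $\omega>0$, one gets $\epsilon\bE=\tfrac{i}{\omega}\nabla\times\bH$, whose distributional divergence vanishes on all of $\RR^3\setminus\overline{B}$, hence $\epsilon\bE\in H_{\mathrm{loc}}(\mathrm{div};\RR^3\setminus\overline{B})$; since $\p\Omega$ is an interface interior to this set, the $H^{-1/2}(\p\Omega)$ normal traces from the two sides must agree, which is \eqref{eq:addinnertran1}. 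Your caveat about regularity is also well placed: $\bE$ is only in $H_{\mathrm{loc}}(\mathrm{curl})$, so the identity must be read as an equality of $H^{-1/2}(\p\Omega)$ normal traces of an $H(\mathrm{div})$ field rather than of pointwise limits, which is the sense in which the lemma is used later in the uniqueness proof for the medium. The only cosmetic remark is that the final integration by parts against test functions straddling $\p\Omega$ is not really a separate step: the global statement $\nabla\cdot(\epsilon\bE)=0$ in $\mathcal{D}'(\RR^3\setminus\overline{B})$ is, by definition, equivalent to the vanishing of the jump of the normal trace across any Lipschitz interface on which the field is $H(\mathrm{div})$ from both sides; your test-function computation just unwinds that equivalence explicitly.
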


\begin{thm}\label{thm:mo2}
Suppose that $\epsilon_j$ are constant in $\Omega\setminus \overline{B}$. For any fixed incident direction $\mathbf{d}$ and polarization $\mathbf{q}$,
 if
\beq\label{eq:th032}
\bE^{\infty}_1(\hat{\Bx},\omega) = \bE^{\infty}_2(\hat{\Bx},\omega)
\eeq
for all observation directions $\hat{\Bx}\in\mathbb{S}^2$ and all frequencies  $\omega\in (0, \omega_0)$,
then $\epsilon_1=\epsilon_2$.
\end{thm}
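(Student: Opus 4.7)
The plan is to adapt the scheme of Theorem~\ref{thm:mo1}, but to extract $\epsilon$ this time from the leading term of the \emph{electric} field (which, unlike the magnetic one, depends on $\epsilon$). Since $B_1=B_2=B$ by Theorem~\ref{thm:mo1} and $\Omega_1=\Omega_2=\Omega$ by assumption, Rellich's lemma applied to \eqnref{eq:th032} yields $\bE_1=\bE_2$ on $\RR^3\setminus\overline{\Omega}$ for every sufficiently small $\omega$, and matching coefficients in the low-frequency expansion \eqnref{eq:solforPEC01} gives, in particular,
\[
\bE_{1,0}=\bE_{2,0}\quad\mbox{in}\,\,\RR^3\setminus\overline{\Omega},\qquad \bE_{j,0}:=(I_3-\mathfrak{K}_1(\epsilon_j))^{-1}\mathfrak{C}_1[\mathbf{p}].
\]

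Next I would use Lemma~\ref{le:revisele02} to write $\bE_{j,0}=\bF+\nabla u_j$, where $\bF:=\mathfrak{C}_1[\mathbf{p}]$ is manifestly independent of $\epsilon_j$ and $u_j\in H^1_{loc}(\RR^3\setminus\overline{B})$ decays like $\|\Bx\|^{-1}$. A direct calculation from the definition of $\mathfrak{C}_1$ gives $\nabla\cdot\bF=0$, so with $\epsilon_j$ constant inside $\Omega\setminus\overline{B}$ the equivalent scalar problem from the proof of Lemma~\ref{le:01} reduces to $\Delta u_j=0$ in both $\Omega\setminus\overline{B}$ and $\RR^3\setminus\overline{\Omega}$. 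The exterior matching and the decay at infinity force $u_1=u_2$ outside $\Omega$; then the $H^1$-continuity of $u_j$ across $\partial\Omega$ gives $v:=u_1-u_2$ vanishing on $\partial\Omega$ from inside, while the boundary data on $\partial B$ give $v|_{\partial B}=c_1-c_2$ and $\int_{\partial B}\nu\cdot\nabla v\,ds=0$. A capacitary argument (write $v=(c_1-c_2)w$ for the harmonic $w$ equal to $1$ on $\partial B$ and $0$ on $\partial\Omega$, and note that $\int_{\partial B}\nu\cdot\nabla w\,ds\neq 0$ since it is (minus) a capacity) then forces $c_1=c_2$, so $u_1\equiv u_2$ on all of $\RR^3\setminus\overline{B}$.

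Finally, the desired $\epsilon_1=\epsilon_2$ comes from the distributional transmission at $\partial\Omega$ encoded in $\nabla\cdot(\epsilon_j\nabla u_j)=-\epsilon_0\nabla\cdot(\tilde\epsilon_j\bF)$, namely
\[
\epsilon_0\,\nu\cdot\nabla u_j|_+-\epsilon_j\,\nu\cdot\nabla u_j|_-=(\epsilon_j-\epsilon_0)\,\nu\cdot\bF\quad\mbox{on}\,\,\partial\Omega.
\]
Subtracting the two cases $j=1,2$ (and using $u_1=u_2$) yields the dichotomy $(\epsilon_1-\epsilon_2)\,\nu\cdot\bE_0|_-=0$ on $\partial\Omega$, where $\bE_0:=\bE_{1,0}=\bE_{2,0}$. \textbf{The main obstacle} is then to rule out the degenerate alternative $\nu\cdot\bE_0|_-\equiv 0$ on $\partial\Omega$. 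My plan is that under this alternative the transmission forces $\nu\cdot\bE_0|_+\equiv 0$ as well, so that $\nu\cdot\nabla u_1$ is continuous across $\partial\Omega$; combined with the $H^1$-continuity this upgrades $u_1$ to a harmonic function on all of $\RR^3\setminus\overline{B}$ that is constant on $\partial B$, decays at infinity, and has vanishing flux through $\partial B$. Exterior potential theory then forces $u_1\equiv 0$ and hence $\bE_0\equiv\bF$, which substituted back into $(I_3-\mathfrak{K}_1(\epsilon_1))[\bE_0]=\bF$ gives the algebraic identity $\mathfrak{K}_1(\epsilon_1)[\bF]=0$; its failure for $\tilde\epsilon_1\neq 0$ provides the contradiction. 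Should this last step prove too rigid in special geometries, one can run the same argument on the first-order coefficient $(I_3-\mathfrak{K}_1(\epsilon_j))^{-1}\mathfrak{C}_1[(\Bx\cdot\mathbf{d})\mathbf{p}]$ in \eqnref{eq:solforPEC01}; the resulting pair of vanishing-flux conditions cannot hold simultaneously for both `incident' profiles $\mathbf{p}$ and $(\Bx\cdot\mathbf{d})\mathbf{p}$ in non-degenerate configurations.
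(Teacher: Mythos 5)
Your overall strategy --- Rellich's lemma, matching the $O(1)$ coefficient in the low-frequency expansion \eqnref{eq:solforPEC01}, the decomposition $\bE_{j,0}=\bF+\nabla u_j$ via Lemma~\ref{le:revisele02}, and the capacity argument giving $u_1=u_2$ --- is sound, and it runs as a genuine alternative to the paper's route (the paper instead derives the proportionality $u_1=\epsilon_1^{-1}\epsilon_2 u_2$ from the transmission condition and then exploits the operator identity to arrive at $\epsilon_0^{-1}\epsilon_2\nabla u_2=\mathfrak{C}_1[\mathbf{p}]$). The genuine gap is in your exclusion of the degenerate alternative $\nu\cdot\bE_0|_-\equiv 0$ on $\p\Omega$. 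You reduce it to the identity $\mathfrak{K}_1(\epsilon_1)[\bF]=0$ and assert that ``its failure for $\tilde\epsilon_1\neq 0$ provides the contradiction,'' but that assertion is exactly what needs proof, and in fact it is not a contradiction at all: under the degenerate hypotheses the field $\bF=\mathfrak{C}_1[\mathbf{p}]$ is curl-free and divergence-free in $\Omega\setminus\overline{B}$, satisfies $\nu\times\bF=0$ on $\p B$ (so its potential is constant there) with $\int_{\p B}\nu\cdot\bF\,ds=0$ (by Stokes' theorem applied to the $\nabla\times\Acal_B^0$ part and the divergence theorem applied to $\mathbf{p}$), and now also $\nu\cdot\bF=0$ on $\p\Omega$. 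The energy identity in $\Omega\setminus\overline{B}$ then forces $\bF\equiv 0$ there, whence $\mathfrak{K}_1(\epsilon_1)[\bF]=D^2\Vcal_{\Omega\setminus\overline{B}}^{0}[\tilde\epsilon_1\bF]-\cdots=0$ holds trivially. So the quantity you propose to contradict actually vanishes in the very scenario you are trying to rule out.

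The contradiction you need sits one step further on and is of ``behaviour at infinity'' type, exactly as in the paper's endgame: once the degenerate alternative forces $\bF\equiv 0$ in $\Omega\setminus\overline{B}$, unique continuation of the harmonic field $\bF$ through the connected set $\RR^3\setminus\overline{B}$ gives $\bF\equiv 0$ everywhere, which is incompatible with $\mathfrak{C}_1[\mathbf{p}]=\mathbf{p}-\nabla\Scal_B^0\bigl(\frac{I}{2}+(\Kcal_B^0)^*\bigr)^{-1}[\nu\cdot\mathbf{p}]\rightarrow\mathbf{p}\neq 0$ as $\|\Bx\|\rightarrow\infty$; compare the paper's step from \eqnref{eq:revisetmp00124} to \eqnref{eq:revisetmp00011}, where the contradiction is likewise that a gradient field decaying like $\|\Bx\|^{-2}$ cannot equal $\mathfrak{C}_1[\mathbf{p}]$. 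With this replacement your argument closes; without it the last step is unjustified, and your fallback via the first-order coefficient $(I_3-\mathfrak{K}_1(\epsilon_j))^{-1}\mathfrak{C}_1[(\Bx\cdot\mathbf{d})\mathbf{p}]$ is only a sketch that would face the same difficulty.
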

\begin{proof}
Using Rellich's lemma \cite{CK}, we deduce from the assumption \eqref{eq:th032} that
$$
\bE_1=\bE_2\quad\mbox{in}\,\,\RR^3\setminus \overline{\Omega}.
$$
Note that $\epsilon = \epsilon_0$ in $\RR^3\setminus \overline{\Omega}$, by using the transmission condition \eqnref{eq:addinnertran1} one has
\beq\label{eq:revisetmp0000011}
\nu\cdot \bE_1|_-=\epsilon_1^{-1}\epsilon_2\nu\cdot \bE_2|_- \quad \mbox{on}\,\, \p \Omega.
\eeq
By using \eqnref{eq:lerevise0201}, \eqnref{eq:revisetmp00011} and the PEC condition on $\p B$, one can set $\bE_j=\nabla u_j +\Ocal(\omega)$, where $u_j$ are harmonic functions in $\Omega\setminus\overline{B}$ and are independent of $\omega$ and $u_1=u_2=0$ on $\p B$.
Since $\epsilon_j$, $j=1, 2$ are constants in $\Omega\setminus\overline{B}$, using the boundary condition \eqref{eq:revisetmp0000011}, we conclude that the difference $u:=u_1-\epsilon_1^{-1}\epsilon_2 u_2$ solves the following system
\beq
\left\{
\begin{split}
&\Delta u= 0 \quad \mbox{in}\,\, \Omega\setminus\overline{B}, \\
&u = 0 \quad \mbox{on}\,\, \p\, B,\\
&\nu\cdot\nabla u = 0 \quad \mbox{on}\,\, \p\,\Omega.
\end{split}
\right.
\eeq
Using Gauss divergence theorem, one immediately has $u=0$ in $\Omega\setminus\overline{B}$ and thus
\beq\label{eq:revisetmp0000012}
u_1=\epsilon_1^{-1}\epsilon_2 u_2 \quad \mbox{in}\,\, \Omega\setminus\overline{B}.
\eeq
By using \eqnref{eq:solforPEC01} one further has
\begin{equation*}
\nabla (u_1-u_2)=\tilde\epsilon_1\mathfrak{K}_1[\nabla u_1]-\tilde\epsilon_2\mathfrak{K}_1[\nabla u_2]\quad \mbox{in}\,\, \Omega\setminus\overline{B},
\end{equation*}
which together with \eqnref{eq:revisetmp0000012} yields
\beq\label{eq:revisetmp00122}
(\epsilon_2-\epsilon_1)\nabla u_2=(\tilde\epsilon_1\epsilon_2-\epsilon_1\tilde\epsilon_2)\mathfrak{K}_1[\nabla u_2]\quad \mbox{in}\,\, \Omega\setminus\overline{B}.
\eeq
If $\epsilon_1\neq\epsilon_2$, then by using \eqnref{eq:revisetmp00122} one has
\begin{equation*}
\nabla u_2=-\mathfrak{K}_1 [\nabla u_2]\quad \mbox{in}\,\, \Omega\setminus\overline{B},
\end{equation*}
which together with \eqnref{eq:solforPEC01} and \eqnref{eq:revisetmp00011} gives
\beq\label{eq:revisetmp00124}
\epsilon_0^{-1}\epsilon_2\nabla u_2=\mathfrak{C}_1[\mathbf{p}]\quad \mbox{in}\,\, \Omega\setminus\overline{B}.
\eeq
By the definition of $\mathfrak{C}_1[\mathbf{p}]$ and Lemma 5.5 in \cite{ADM}, one then has
\beq\label{eq:revisetmp00011}
\mathfrak{C}_1[\mathbf{p}]=-\nabla \Scal_B^0\left(\frac{I}{2}+(\Kcal_B^0)^*\right)^{-1}[\nu\cdot \mathbf{p}]+\mathbf{p}.
\eeq
Noting that $\mathfrak{K}_1 [\epsilon_2\nabla u_2]$ is a harmonic function in $\RR^3\setminus\overline{B}$ and decays as $\|\Bx\|^{-2}$ at infinity, and by \eqnref{eq:revisetmp00124} and unique continuation,  one has
\begin{equation*}
-\nabla \Scal_B^0\left(\frac{I}{2}+(\Kcal_B^0)^*\right)^{-1}[\nu\cdot \mathbf{p}]+\mathbf{p}=\Ocal(\|\Bx\|^{-2}) \quad \mbox{as}\,\, \Bx\rightarrow \infty,
\end{equation*}
which is a contradiction. Thus $\epsilon_1=\epsilon_2$.

The proof is complete.
\end{proof}

Clearly, by Theorems~\ref{thm:mo1} and \ref{thm:mo2}, an EM scatterer of the form $(\Omega\backslash\overline{B}, \epsilon)$ with both the embedded PEC obstacle $B$ and the constant permittivity $\epsilon$ being unknown, can be uniquely recovered by the multiple-frequency far-field data as specified in \eqref{eq:th031} or \eqref{eq:th032}.
Finally, we would like to remark on the measurements. Throughout, we have made use of the far-field data for the recovery of the inverse scattering problem. However, all the results equally hold when the far-field data are replaced by the near-field data, consisting of the tangential components of the electric and magnetic fields measured on any open surface away from the scatterer. Indeed, by the unique continuation principle for the Maxwell system \cite{CK}, those two sets of data are equivalent to each other. 

\section*{Acknowledgment}
The work of Y. Deng was supported by NSF grant of China, No. 11601528, NSF grant of Hunan No.2017JJ3432, Mathematics and Interdisciplinary Sciences Project, Central South University. The work of H. Liu was supported by the startup fund and FRG grants from Hong Kong Baptist University, and Hong Kong RGC General Research Funds, 12302415 and 12302017.
The work of X. Liu was supported by the Youth Innovation Promotion Association CAS and the NNSF of China under grant 11571355.

\end{document}